\newfont{\msbm}{msbm10 at 11pt}
\newcommand {\R} {\mbox{\msbm R}}
\newcommand {\N} {\mbox{\msbm N}}
\def\Var{\textup{Var}}
\newtheorem{Theo}{Theorem}
\newtheorem{Lemma}[Theo]{Lemma}
\newtheorem{Cor}[Theo]{Corollary}
\newtheorem{Prop}[Theo]{Proposition}
\newtheorem{Exm}[Theo]{Example}
\begin{document}
\title{The evolving beta coalescent}

\author{G\"otz Kersting\thanks{Supported in part by DFG SPP 1590 {\em Probabilistic Structures in Evolution}} , Jason Schweinsberg\thanks{Supported in part by NSF Grant DMS-1206195} \  and Anton Wakolbinger$^*$}
\maketitle

\begin{abstract}
In mathematical population genetics, it is well known that one can represent the genealogy of a population by a tree, which indicates how the ancestral lines of individuals in the population coalesce as they are traced back in time.  As the population evolves over time, the tree that represents the genealogy of the population also changes, leading to a tree-valued stochastic process known as the evolving coalescent.  Here we will consider the evolving coalescent for populations whose genealogy can be described by a beta coalescent, which is known to give the genealogy of populations with very large family sizes.  We show that as the size of the population tends to infinity, the evolution of certain functionals of the beta coalescent, such as the total number of mergers, the total branch length, and the total length of external branches, converges to a stationary stable process.  Our methods also lead to new proofs of known asymptotic results for certain functionals of the non-evolving beta coalescent.
\end{abstract}
Keywords: beta coalescent, evolving coalescent, total branch length, total external length, number of mergers, stable moving average processes.
\\
AMS MSC 2010: Primary 60K35, Secondary 60F17, 60G52, 60G55, 92D15.
\section{Introduction}

In 1999, Pitman \cite{pit99} and Sagitov \cite{sag99} introduced coalescents with multiple mergers, also known as $\Lambda$-coalescents.  These processes are continuous-time Markov processes taking values in the set of partitions of $\N$, and as time goes forward, blocks of the partition merge together.  For any finite measure $\Lambda$ on $[0,1]$, the $\Lambda$-coalescent is defined by the property that whenever the restriction of the process to $\{1, \dots, n\}$ has $b$ blocks, each possible transition that involves $b$ blocks merging into one happens at rate $$\lambda_{b,k} = {\int_0^1 p^{k-2} (1-p)^{b-k} \: \Lambda(dp)},$$ and these are the only transitions that occur.  This means that when there are $b$ blocks, the total rate of all mergers is $$\lambda_b = \sum_{k=2}^b \binom{b}{k} \lambda_{b,k}.$$  When $\Lambda$ is a unit mass at zero, only two blocks ever merge at a time, and each transition that involves two blocks merging into one happens at rate $1$, so we get the celebrated Kingman's coalescent \cite{king82}.  When $\Lambda$ is the uniform distribution on $[0,1]$, the $\Lambda$-coalescent is known as the Bolthausen-Sznitman coalescent \cite{bosz98}.

Coalescent processes arise naturally in population genetics, where they are used to model the genealogy of populations.  The genealogy of a population of size $n$ can be modeled by a process $(\Pi_n(r), r \geq 0)$ taking its values in the set of partitions of $\{1, \dots, n\}$.  The integers $i$ and $j$ are in the same block of the partition $\Pi_n(r)$ if and only if the $i$th and $j$th individuals in the population have the same ancestor $r$ units back in time.  Typically, Kingman's coalescent is used to model the genealogy of populations.  However, in some circumstances, other $\Lambda$-coalescents could describe the genealogy of a population.  For example, it was shown in \cite{sch03} that if the probability of an individual having $k$ or more offspring is proportional to $k^{-\alpha}$, where $1 < \alpha < 2$, then the genealogy of the population is best described by the $\Lambda$-coalescent in which $\Lambda$ is the Beta$(2 - \alpha, \alpha)$ distribution.  This process will hereafter be called the Beta$(2 - \alpha, \alpha)$-coalescent, or simply the beta coalescent.  Beta coalescents are thus natural models for populations with large family sizes, and predictions from beta coalescents were shown in \cite{marine} to fit genetic data from some marine species.

There has been recent interest in describing not only the genealogy of a population at a fixed time but also how the genealogy of a population changes over time.  At any given time, the genealogical structure described by the coalescent process can also be represented by a tree.  The shape of this tree changes over time, and the associated tree-valued process $({\cal T}_n(t), t \in \mathbb R)$ is known as an evolving coalescent.  For populations whose genealogy at a fixed time is described by Kingman's coalescent, the associated evolving coalescent was studied in \cite{pw06, pww09, gpw08}. The evolving Bolthausen-Sznitman coalescent was studied in \cite{sch12}.

In the present paper, we study the evolving coalescent for populations whose genealogy is given by the Beta$(2 - \alpha, \alpha)$-coalescent, where $1 < \alpha < 2$.  To this end we consider a time rescaling depending on $n$, the {\em scaled} time being $s=n^{\alpha-1} t$. 
On the original time scale, merging events occur at a rate of order $n^\alpha$; see formula \eqref{rate}  below. Therefore the number of coalescent events in one unit of the scaled time is of order $n$, meaning that the rate at which a specific one of the $n$ lineages takes part in a merging event is of order 1. Thus, while the original time $t$ captures {\em evolutionary time}, the scaled time $s$ plays the role of a {\em generation time}.

We show that as $n \rightarrow \infty$, the distribution of certain functionals of the beta coalescent converges to a stable distribution of index $\alpha$.  For the evolving beta coalescent, the distribution of these functionals evaluated at times $ s_1 < s_2 <\cdots < s_d$ converges as $n \rightarrow \infty$ to a multivariable stable distribution of index $\alpha$.  Examples of functionals that fit into this framework include the total number of merger events before all of the lineages have coalesced into a single lineage, the total length of all branches in the tree, and the total length of all external branches in the tree.

As a typical result, which follows by combining  Example \ref {Lnex} and Corollary \ref{Cor2} below, we state the following theorem, which shows how the total branch length of the coalescent tree evolves over time.
\begin{Theo}
Let $\mathcal L_n (t)$ be the total branch length of ${\cal T}_n(t)$, $t \in \mathbb R$.  Then for $1 < \alpha < \frac{1}{2}(1 + \sqrt{5})$ the sequence of processes
$$n^{\alpha - 1 - 1/\alpha} \bigg( \mathcal L_n(n^{1-\alpha}s) - \frac{\alpha (\alpha - 1) \Gamma(\alpha) n^{2 - \alpha}}{2 - \alpha} \bigg) , \hspace{.3in}  -\infty < s < \infty, $$
converges in finite-dimensional distributions as $n \to \infty$ to the moving average process
\begin{equation}\label{maprocess}
\int_{0}^{\infty} g(r)\: dL_{s-r} , \hspace{.3in}  -\infty < s < \infty,
\end{equation}
where $g(r) = (\alpha-1)(\alpha\Gamma(\alpha))^{\frac{1}{\alpha-1}}(r+\alpha\Gamma(\alpha))^{-\frac{2-\alpha}{\alpha-1}} $ and $(L_s)_{-\infty < s < \infty} $, is a mean zero L\'evy process with $L_0 =0$ and L\'evy measure 
\begin{equation}\label{Levymeas}
 \frac{1}{\Gamma(\alpha) \Gamma(2 - \alpha)} u^{-1 - \alpha} \: du, \hspace{.3in} u>0.
 \end{equation}
\end{Theo}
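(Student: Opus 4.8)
The plan is to deduce the theorem from the two general results already in place: Example \ref{Lnex}, which exhibits the total branch length as an instance of the functional framework and computes the associated impact data, and Corollary \ref{Cor2}, which gives the convergence of such functionals of the evolving beta coalescent to a stable moving average. The heart of the argument is therefore to (i) verify that the total branch length meets the hypotheses of Corollary \ref{Cor2}, which is the content of Example \ref{Lnex}, and (ii) substitute the impact data for the total branch length into the conclusion of Corollary \ref{Cor2} to read off the explicit kernel $g$, the centering constant, and the L\'evy measure \eqref{Levymeas}.

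First I would recall the additive structure of the total branch length. Writing $N_n(u)$ for the number of blocks $u$ units of real time into the past, one has $\mathcal{L}_n = \int_0^\infty N_n(u)\,du$, so each merger contributes to $\mathcal{L}_n$ an amount governed by how far it decreases the block count and how long the resulting configuration persists. In the beta coalescent the sizes of mergers obey a power law, and this is the source of the stable fluctuations: the centered, suitably scaled contributions of the merger events converge to the increments of a mean-zero $\alpha$-stable L\'evy process whose L\'evy measure is the one displayed in \eqref{Levymeas}. The centering constant $\alpha(\alpha-1)\Gamma(\alpha)n^{2-\alpha}/(2-\alpha)$ is the leading-order asymptotics of $\mathbb{E}[\mathcal{L}_n]$, which I would extract from the deterministic profile of the block-counting process $N_n$.

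Next I would identify the kernel $g$. In the evolving picture, the genealogy at scaled time $s$ retains the effect of a merger occurring at scaled time $s-r$ with a weight that decays in $r$ at the rate at which the old structure is flushed out of the current sample. This weight is exactly $g(r)$, and I expect it to arise as the solution of the deterministic relaxation equation governing the block-counting profile of the beta coalescent; evaluating it yields the stated form $g(r)=(\alpha-1)(\alpha\Gamma(\alpha))^{1/(\alpha-1)}(r+\alpha\Gamma(\alpha))^{-(2-\alpha)/(\alpha-1)}$. As a consistency check one finds $g(0)=\alpha(\alpha-1)\Gamma(\alpha)$, matching the instantaneous impact of a merger on the total branch length and the coefficient appearing in the centering.

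The main obstacle, and the origin of the restriction $\alpha<\tfrac12(1+\sqrt5)$, is ensuring that the limiting object is well defined and that the convergence actually holds. Since $g(r)\sim r^{-(2-\alpha)/(\alpha-1)}$ as $r\to\infty$, the $\alpha$-stable integral $\int_0^\infty g(r)\,dL_{s-r}$ converges precisely when $\int_0^\infty g(r)^\alpha\,dr<\infty$, i.e.\ when $\alpha(2-\alpha)/(\alpha-1)>1$, which rearranges to $\alpha^2-\alpha-1<0$, the golden-ratio bound. Thus the hypothesis is exactly the $L^\alpha$-integrability threshold for the moving-average kernel, and the bulk of the technical effort lies in the tail control showing that the contributions of the many small early mergers do not overwhelm the stable limit when $\alpha<\tfrac12(1+\sqrt5)$. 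Once this control is in hand, Corollary \ref{Cor2} delivers the finite-dimensional convergence, and the theorem follows by reading off the explicit constants.
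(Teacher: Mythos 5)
Your proposal follows the paper's own route: the theorem is obtained there exactly by combining Example \ref{Lnex} with Corollary \ref{Cor2}, and your substitution checks are correct. With $f(x)=\alpha(\alpha-1)\Gamma(\alpha)x^{1-\alpha}$ and $m$ as in \eqref{mtdef} one has $g(r)=f(m(r))\,m(r)$, the consistency check $g(0)=\alpha(\alpha-1)\Gamma(\alpha)$ holds, and $\int_0^\infty g(r)^\alpha\,dr<\infty$ if and only if $\alpha^2-\alpha-1<0$; by the substitution $x=m(r)$ this is the same as $\int_0^1 f(x)^\alpha\,dx<\infty$, equivalently the condition $\zeta=\alpha-1<1/\alpha$ making $f\in\mathscr F$ in \eqref{fcond}. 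So your identification of the kernel, the centering, and the integrability threshold all match the paper.

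There is, however, one genuine omission in your own account of step (i). Theorem \ref{onetime} and Corollary \ref{Cor2} apply only to functionals of the jump chain, of the form $\frac{1}{\alpha-1}\sum_{k<\tau_n}f(X_k/n)$, whereas $\mathcal L_n=\sum_{k<\tau_n}X_k(R_{k+1}-R_k)$ also involves the random exponential holding times (and your identity $\mathcal L_n=\int_0^\infty N_n(u)\,du$ needs the integral cut off at the time of the MRCA, since $N_n(u)\to 1$). The actual work in Example \ref{Lnex} is the de-randomization: first $\mathcal L_n$ is replaced by $\mathcal L_n'=\sum_{k<\tau_n}X_k/\lambda_{X_k}$, with the conditional mean and variance computations \eqref{mean2prime}--\eqref{var2prime} and Chebyshev controlling the holding-time fluctuations, and then $\mathcal L_n'$ is replaced by $\mathcal L_n''=\alpha\Gamma(\alpha)\sum_{k<\tau_n}X_k^{1-\alpha}$ using the rate asymptotics \eqref{rate}, which give only an $O(1)$ error \eqref{Lnprime}. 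Crucially, the golden-ratio hypothesis enters here a \emph{second} time: one needs $n^{\alpha-1-1/\alpha}\cdot O(1)\to 0$, i.e.\ $\alpha-1-1/\alpha<0$, which is again $\alpha^2-\alpha-1<0$ (and the variance term $O(1\vee n^{3-2\alpha})$ is then also killed by the scaling). So your claim that the restriction is ``exactly the $L^\alpha$-integrability threshold for the moving-average kernel'' is numerically right but incomplete as a description of where the hypothesis is used, and your heuristics about merger contributions and a ``relaxation equation'' would not substitute for this step. Since you delegate step (i) to Example \ref{Lnex}, the assembled proof is valid as a citation of the paper's own results; but had you needed to supply that step yourself, the passage from the true, holding-time-dependent branch length to the chain functional $\mathcal L_n''$ is the missing ingredient.
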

The time reversal in the integrator is explained in more detail in Section \ref{secev}. The stochastic integral is well-defined for $\alpha < \frac 12(1+ \sqrt 5)$.  Above this threshold, the statement fails to be true even for the fixed value $s=0$, as shown in \cite{kersting}.  Note that the function $g(r)$ decreases as a power of $r$, and consequently the moving average process defined in (\ref{maprocess}) is not Markovian.  The exponent $-(2 - \alpha)/(\alpha - 1)$ tends to $-\infty$ as $\alpha \rightarrow 1$.  In the limiting case $\alpha = 1$, which corresponds to the Bolthausen-Sznitman coalescent, it is known that the evolution of the total branch length converges to a moving average process defined as in (\ref{maprocess}), but with $g(r) = e^{-r}$.  In this case the limit process is Markovian.  For details, see \cite{sch12}.

Our approach consists of deriving asymptotic expansions for suitable functionals by means of Poisson integrals. Thereby we rediscover some known results for functionals of the static (non-evolving) beta coalescent, e.g. for its total length $\mathcal L_n$ and total external length $\ell_n$. Moreover by means of the Poisson integral representations we get hold also on  their (properly scaled) joint distributions, which are asymptotically multivariate stable. This allows us to calculate, for example, the asymptotic distribution of $\ell_n/\mathcal L_n$ (see Example \ref{lnLn} in Section \ref{Fubecoa}).

We give a precise construction of the evolving beta coalescent, as well as a precise statement of the main results and a few examples in section \ref{resultssec}.  Proofs are given in section~\ref{proofsec}.

\section{ Framework, main results, and examples}\label{resultssec}

\subsection{Construction of the evolving beta coalescent}\label{constsec}

We give here a precise construction of the evolving $\Lambda$-coalescent, which is modeled after the Poisson process construction of the $\Lambda$-coalescent given in \cite{pit99} and is similar to the construction of the evolving Bolthausen-Sznitman coalescent in \cite{sch12}.  Let $\Lambda$ be a finite measure on $(0,1]$.  We will construct a population of fixed size $n$ defined for all times $t \in \R$ whose genealogy is given by the $\Lambda$-coalescent.  Individuals in the population will be labeled $1, \dots, n$.

Let $ \Upsilon=\Upsilon_n$ be a Poisson point process on $\R \times (0,1] \times [0,1]^n$ with intensity measure 
$$dt \times {p^{-2} \Lambda(dp) \times dv_1 \dots dv_n}.$$  Suppose $(t, p, v_1, \dots, v_n)$ is a point of $\Upsilon$.  If zero or one of the points $v_1, \dots, v_n$ is less than $p$, then no change in the population occurs at time $t$.  However, if $k \geq 2$ of these points are less than $p$, so that $v_{i_1} < \dots < v_{i_k} < p$, then at time $t$, the individuals labeled $i_2, \dots, i_k$ all die, and the individual labeled $i_1$ gives birth to $k - 1$ new individuals who take over the labels $i_2, \dots, i_k$.  This implies that if we are following the genealogy of the population backwards in time, the lineages labeled $i_1, \dots, i_k$ will all coalesce at time $t$.  To see that the $\Lambda$-coalescent describes the genealogy of this population, note that the rate of events that cause the lineages $i_1, \dots, i_k$ to coalesce is $$\int_0^1 p^k (1-p)^{n-k} \cdot p^{-2} \Lambda(dp) = \lambda_{n,k}.$$  This construction is well-defined because the rate of changes in the population is bounded above by $$\int_0^1 \binom{n}{2} p^2 \cdot p^{-2} \Lambda(dp) < \infty.$$
Note that, since we are ordering the genealogy with respect to the $v_i$, and not in a lookdown manner with respect to the indices $i$, we do not have strong consistency in $n$.

Although this construction works whenever $\Lambda(\{0\}) = 0$, we will hereafter restrict ourselves to the case in which $\Lambda$ is the Beta$(2 - \alpha, \alpha)$ distribution.  For each $t \in \R$, there will be a different realization of the beta coalescent which describes the genealogy of the population at time $t$.  We denote the corresponding coalescent tree (read off from the genealogy backwards from time  $t$)  by ${\cal T}_n(t)$ and call the process $({\cal T}_n(t), t \in \R)$ the evolving beta coalescent.

\subsection{Two Poisson processes}\label{twoPPP}

The evolving beta coalescent is constructed  from a Poisson process $\Upsilon_n$ on $\R \times (0,1] \times [0,1]^n$ with intensity measure \begin{align*}
dt \times \frac{1}{\Gamma(\alpha) \Gamma(2 - \alpha)} p^{-1-\alpha} (1 - p)^{\alpha - 1} \: dp \times dv_1 \dots dv_n.
\end{align*}
  In this section, we will construct two other Poisson processes, denoted by $\Psi_n$ and $\Theta_n$, that will be useful for analyzing   the evolving  beta coalescent and the static (non-evolving) beta coalescent back from time 0, respectively.

First, we obtain a Poisson process $\Upsilon'$ on $\R \times \R^+$  in two steps by discarding all but the first two coordinates of the points of $\Upsilon_n$, and then augmenting these points with the points of an independent Poisson process with intensity $$dt \times \frac{1}{\Gamma(\alpha) \Gamma(2 - \alpha)} p^{-1-\alpha} (1 - q(p)) \: dp,$$  where $q(p) = 0$ if $p \geq 1$ and $q(p) = (1 - p)^{\alpha - 1}$ if $0 < p < 1$.  Note that 
$\Upsilon'$ has intensity
\begin{equation}\label{psiintensity}
dt \times \frac{1}{\Gamma(\alpha) \Gamma(2 - \alpha)} p^{-1 - \alpha} \: dp.
\end{equation}
From $\Upsilon'$, we can then obtain a Poisson process $\Psi_n$ via the mapping $$(t, p) \mapsto (s,u):= (n^{\alpha-1} t, n^{1-1/\alpha} p).$$ \begin{figure}[H]
\psfrag{y}{$\Upsilon'$}
\psfrag{0}{$t=0$}
\psfrag{2}{$s=0$}
\psfrag{t}{$t$}
\psfrag{u}{$u$}
\psfrag{p}{$p$}
\psfrag{s}{$s$}
\psfrag{1}{$p=1$}
\psfrag{q}{$\Psi_n$}
\includegraphics[width=16cm]{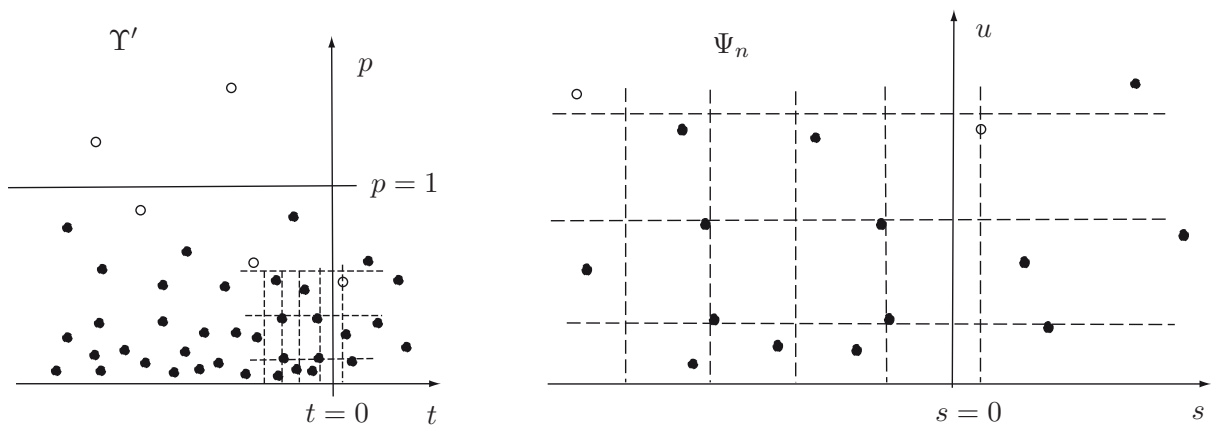}
\caption{The process $\Upsilon'$ contains the points (marked by $\bullet$) from the first two coordinates of the points of $\Upsilon$, plus additional points (marked by $\circ$) that  make up for the difference between the intensities $p^{-1-\alpha} (1-p)^{\alpha-1}  1_{p\le 1}$ and $p^{-1-\alpha}$, $p>0$. The point process $\Psi_n$ arises from $\Upsilon'$ through the transformation  $(t,p) \mapsto (s,u)=(n^{\alpha-1}t, n^{1-1/\alpha}p)$.}
\end{figure}
That is, if $(t,p)$ is a point of $\Upsilon'$, then $(n^{\alpha-1} t, n^{1-1/\alpha} p)$ is a point of $\Psi_n$.  It is straightforward to verify that the intensity of $\Psi_n$ is also given by (\ref{psiintensity}), now with $(t,p)$ replaced by $(s,u)$.

The reason for considering the rescaled Poisson process $\Psi_n$ is that the fluctuations in the behavior of the beta coalescent that will be important for studying the evolving coalescent are those that happen after the coalescent has evolved for a time which is $O(n^{1-\alpha})$.  Also, the largest mergers on this time scale affect approximately a fraction $O(n^{-1 + 1/\alpha})$ of the blocks, in other words a number of $O(n^{1/\alpha})$ blocks out of the $n$ initial blocks.  If $(s, u)$ is a point of $\Psi_n$, then if this point corresponds to a point of $\Upsilon_n$ (i.e. unless it corresponds to a point of $\Upsilon'$ that does not belong to $\Upsilon_n$), at time $n^{1-\alpha}s$ there is an event during which approximately a fraction $n^{-1 + 1/\alpha} u$ of the blocks merge together.

We now define another Poisson process $\Theta_n$ on ${(0,1]} \times \R^+$ which is useful for studying the static beta coalescent that describes the genealogy of the population at time $0$.   { Writing $$r=-s$$ for the {\em reverse} time, which is the coalescence time direction,} we obtain $\Theta_n$ { by first restricting $\Psi_n$ to all points $(s,u)$ with $s\le 0$  and then applying the mapping} $(s,u) \mapsto (x, y)$,  $t \le 0$, to the remaining points,  where 
\begin{align}
x =  m(r), \hspace{.3in} y = m(r)u, 
\label{xymapping}
\end{align} 
for
\begin{equation}\label{mtdef}
m( r ) = \bigg( \frac{\alpha \Gamma(\alpha)}{r + \alpha \Gamma(\alpha)} \bigg)^{1/(\alpha - 1)}, \hspace{.3in} r \ge 0.
\end{equation} 
This quantity is the asymptotic proportion of the number of blocks that have not yet coalesced by the reverse time $r$; see Lemma \ref{Ntlem} below.
\begin{figure}[H]
\psfrag{p}{$\Psi_n |_{ \mathbb R^- \times \mathbb R^+}$}
\psfrag{t}{$t$}
\psfrag{u}{$u$}
\psfrag{z}{$\Theta_n$}
\psfrag{y}{$y$}
\psfrag{x}{$x$}
\psfrag{1}{$1$}
\psfrag{0}{$0$}
\psfrag{r}{$r$}
\hspace{1cm}\includegraphics[width=14cm]{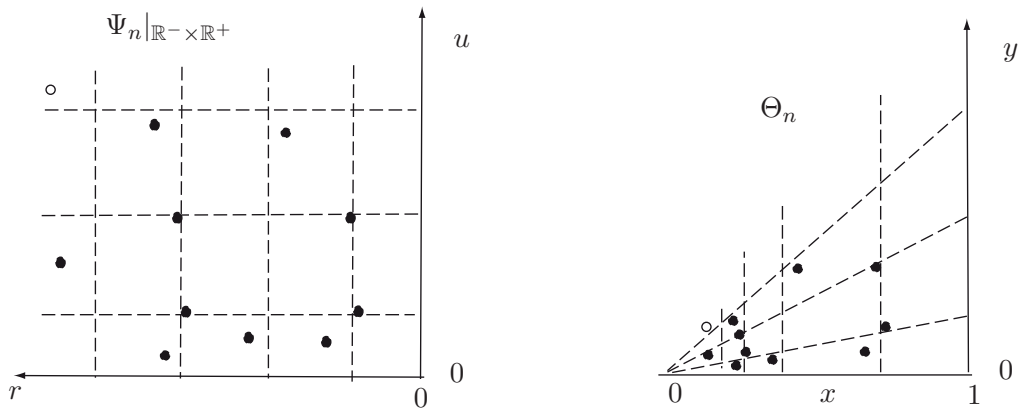}
\caption{The point process $\Theta_n$ arises from $\Psi_n |_{ \mathbb R^- \times \mathbb R^+}$ through the transformation $(-r,u)\mapsto (x,y) = (m(r), m(r)u)$. Here, $r=-s$, and $m(r)$ is given by formula  \eqref{mtdef}.}
\end{figure}
To calculate the intensity of $\Theta_n$, we invert the mapping given by \eqref{xymapping} to get $$r = \alpha \Gamma(\alpha) \bigg( \frac{1}{x^{\alpha - 1}} - 1 \bigg), \hspace{.3in} u = \frac{y}{x}.$$  It follows that
\[
\frac{ds}{dx}=-\frac{dr}{dx} =\alpha (\alpha - 1) \Gamma(\alpha) x^{-\alpha}, \hspace{.3in} \frac{du}{dy} = \frac{1}{x}.
\]
{Since $\frac {ds}{dy}=0$, the Jacobian determinant  of $(x,y)\mapsto (s,u)$ is $D(x,y)=
\alpha(\alpha-1)\Gamma(\alpha) x^{-\alpha-1}$}
and the intensity of $\Theta_n$ is
\begin{align}
\nu(dx,dy) =  \frac{1}{\Gamma(\alpha) \Gamma(2 - \alpha)} \bigg( \frac{y}x \bigg)^{-1-\alpha} D(x,y)\: dx\,dy  = dx \times \frac{\alpha (\alpha - 1)}{\Gamma(2 - \alpha)} y^{-\alpha-1} \: dy, 
\label{intensity}
\end{align}
which again does not depend on $n$.

Suppose $(x,y)$ is a point of $\Theta_n$.  Because the number of blocks in a beta coalescent at time $r$ is approximately $m( r ) n$, the merger corresponding to $(x,y)$ occurs when approximately a fraction $xn$ blocks remain, and the number of blocks that merge is approximately $n^{1/\alpha-1}\cdot yn=n^{1/\alpha} y$.  Therefore, whereas the second coordinate of $\Psi_n$ approximates the {\em fraction} of blocks lost due to a merger, the second coordinate of $\Theta_n$ represents the {\em number} of blocks lost due to a merger.  The fact that the Poisson process $\Theta_n$ is homogeneous in time reflects the fact that the distribution of the number of blocks lost in the first merger tends to a limit as the number of blocks at time zero tends to infinity; see section \ref{numblocksec}.

\subsection{Review of results on stable laws and Poisson integrals}\label{stablesec}

To state and prove our main results, we will need to review some results on both univariate and multivariate stable distributions.  We will restrict ourselves to the stable distributions of index $\alpha$, where $1 < \alpha < 2$.  Following the notation of \cite{sam}, we write in the univariate case $Z \sim S_{\alpha}(\sigma, \beta, \mu)$ if the characteristic function of the random variable $Z$ is given by
$$E(e^{i\theta Z}) = \exp \bigg(i\theta  \mu - \sigma^{\alpha} |\theta |^{\alpha} \bigg(1 - i \beta \mbox{sgn}(\theta ) \tan \bigg( \frac{\pi \alpha}{2} \bigg) \bigg) \bigg),$$
 with location parameter $\mu$ (which equals $E(Z)$ for $1<\alpha < 2$), scale parameter $\sigma > 0$, and skewness parameter $\beta \in [-1,1]$.  Here $\mbox{sgn}(\theta ) = 1$ if $\theta  > 0$, $\mbox{sgn}(\theta ) = -1$ if $\theta  < 0$, and $\mbox{sgn}(0) = 0$.  {We only deal with the case $\mu=0$.}

When $\mu=0$, $\beta =  1$, the characteristic function of $Z$ can also be written in the L\'evy-Khinchine form 
$$E(e^{i\theta Z}) = \exp \bigg(   b \int_0^{\infty} ( { e^{i\theta y} -1-i\theta y})   \: y^{-1-\alpha} \: dy \bigg)$$
(since $\alpha >1$, {we may and do avoid here the customary truncation within the integrand}), where
\begin{equation}\label{musigma}
 \sigma^\alpha = \frac{b \pi}{2\sin( \frac{\pi \alpha}{2} )\Gamma(\alpha+1)}.
\end{equation}

One can then construct $Z$ in the following way.  Consider a Poisson point process $\sum_{i \ge 1} \delta_{y_i}$  on $(0, \infty)$ with intensity $b y^{-1-\alpha} \: dy$, and for $\varepsilon >0$ define $$S(\varepsilon):= \sum_{y_i \ge \varepsilon} y_i, \quad Z(\varepsilon) = S(\varepsilon) - \frac{b\varepsilon^{1 - \alpha}}{\alpha - 1}.$$ Then the limit
\begin{equation}\label{Zlim}
Z = \lim_{\varepsilon \downarrow 0} Z(\varepsilon) \text{ a.s.}
\end{equation}
exists and obeys
\[
E(Z)=E(Z(\varepsilon)) = 0, \hspace{.3in} \textup{Var}(Z - Z(\varepsilon)) =  \int_0^{\varepsilon} y^2 \cdot b y^{-1-\alpha} \: dy = \frac{b \varepsilon^{2-\alpha}}{2 - \alpha}.
\]

To facilitate comparisons with results in \cite{ksw, kersting}, we note that if $c > 0$ and if $Z$ has a stable distribution such that
\begin{equation}\label{Zprop}
E(Z) = 0, \hspace{.3in} P(Z >z) = z^{-\alpha}(1+o(1)), \hspace{.3in} P(Z <-z ) =o( z^{-\alpha})
\end{equation}
as $z \to \infty$,  then $cZ \sim S_{\alpha}(\sigma, 1, 0)$ with $\sigma$ given by (\ref{musigma}) with $b = \alpha c^{\alpha},$ which means
\begin{equation}\label{sigmafromc}
\sigma = c \bigg( \frac{\pi}{2 \sin(\frac{\pi \alpha}{2}) \Gamma(\alpha)} \bigg)^{1/\alpha}.
\end{equation}
Note that the results in \cite{ksw, kersting} are stated with $-Z$ in place of $Z$.

Next we define the notion of an $\alpha$-stable random measure,  following sections 3.3 and 3.12 of \cite{sam}, again only for $\beta=1$.  Let $(E, {\cal E}, {\rho})$ be a measure space, and let ${\cal E}_0 = \{A \in {\cal E}: \rho(A) < \infty\}$.  Then an $\alpha$-stable random measure with control measure $\rho$ is defined as a countably additive function $M$ which assigns a random variable to each set $A \in {\cal E}_0$, satisfying the following properties:
\begin{enumerate}
\item If $A \in {\cal E}_0$, then $M(A) \sim S_{\alpha}(\rho(A)^{1/\alpha}, 1, 0)$.

\item If $A_1, \dots, A_k$ are disjoint sets in ${\cal E}_0$, then $M(A_1), \dots, M(A_k)$ are independent.
\end{enumerate}
To construct $M$, consider
a Poisson point process $\Xi$ on $E \times \R^+$ with intensity $a {\rho}(dx) \times y^{-1-\alpha} \: dy$, where 
 \begin{align}\label{aControl}
 a = \frac{2 \sin(\frac{\pi \alpha}{2}) \Gamma({\alpha +1})}{\pi}.
 \end{align}
Then the second coordinates of those points of $\Xi$ that fall in $A \times \R^+$ form a Poisson point process on $\R^+$ with intensity $a \rho(A) y^{-1-\alpha} \: dy$.  If one constructs $M(A)$ just as $Z$ is obtained in (\ref{Zlim}) with $b = a \rho(A)$, then $M=M^\Xi$ is an $\alpha$-stable random measure with control measure $\rho$ (and $\beta=1$) obtained from $\Xi$.

As noted in section 3.4 of \cite{sam}, if $f: E \rightarrow {\R}$ is a function such that $\int_E |f(x)|^{\alpha} \: \rho(dx) < \infty$, then one can define the integral 
 \begin{align}\label{stint}
 I(f)=\int_E f(x) \: M(dx)
 \end{align}
  by approximating $f$ with simple functions.  It is shown in \cite{sam} that $I(f) \sim S_{\alpha}(\sigma_f, \beta_f, 0)$, where 
\begin{align}\label{sigmaBeta}
\sigma_f = \bigg( \int_E |f(x)|^{\alpha} \: \rho(dx) \bigg)^{1/\alpha}, \hspace{.3in} \beta_f = \frac{\int_E  |f(x)|^\alpha\mbox{sgn}(f(x)) \: \rho(dx)}{\int_E |f(x)|^\alpha \: \rho(dx)}.
\end{align}

In particular it follows  for any linear combination
\[ \theta _1 I(f_1)+ \cdots + \theta _dI(f_d)= I(\theta _1f_1+ \cdots + \theta _d f_d)\sim S(\sigma_f,\beta_f,0) \]
with $f=\theta _1f_1+ \cdots + \theta _d f_d$. In the case $\alpha >1$ this implies that the joint distribution of $I(f_1), \ldots,I(f_d)$ is a multivariate $\alpha$-stable distribution. The characteristic function is
\begin{align*} E(&e^{i(\theta _1 I(f_1)+ \cdots + \theta _dI(f_d))}) \\ &= \exp\bigg(- \int_E|\theta _1f_1+ \cdots + \theta _d f_d|^\alpha (1- i\:\mbox{sgn}(\theta _1f_1+ \cdots + \theta _d f_d) \tan(\tfrac{\pi\alpha}2)) \bigg)\: d\rho \\
&= \exp\bigg(-\int_{S^{d-1}}|\theta \cdot s|^\alpha (1-i \:\mbox{sgn}(\theta \cdot s) \tan(\tfrac{\pi\alpha}2)) \bigg)\: \Gamma(ds)
\end{align*}
where $S^{d-1}\subset \mathbb R^d$ is the unit sphere, $\theta \cdot s= \theta _1s_1+ \cdots \theta _ds_d$, and $\Gamma(ds)$ is the finite measure on $S^{d-1}$ obtained from $(\sum_i f_i(y)^2)^{\alpha/2} \rho(dy)$ by the transformation $y \mapsto (f_1(y), \ldots, f_d(y))/(\sum_i f_i(y)^2)^{1/2}$; see section 3.2 of \cite{sam}.  Here $\Gamma$ is the so-called spectral measure of the $\alpha$-stable random vector $(I(f_1), \ldots, I(f_d))$, which characterizes the joint distribution.

In our case $E$ will be either $\mathbb R $ or the interval $(0,1]$. Then the above Poisson integrals can also be viewed as stochastic integrals using a L\'evy process $L=L^\Xi$ with mean zero, constructed from the Poisson point process $\Xi$ in the usual manner via compensation.  Thus,
\begin{align}\label{intL}\int_E f \: dM = \int_E f \; dL. 
\end{align}



\subsection{An asymptotic expansion for the static case}\label{resultsec}

Consider a beta coalescent back from time 0, and let $N_n(r)$ be the number of blocks in the partition at time $r$, so in particular $N_n(0)=n$.  Let $R_0 = 0$, and for $k \geq 1$, let $$R_k =R_{n,k}= \inf\{r> R_{k-1}: N_n( r ) \neq N_n(R_{k-1})\}.$$  Let $\tau_n = \max\{k: R_k < \infty\}$.  Thus, $R_1 < \dots < R_{\tau_n}$ are the times at which mergers occur, and $\tau_n$ is the number of mergers before only one block remains.  For $0 \leq k \leq \tau_n$, let 
$$X_k=X_{n,k} = N_n(R_k),$$
 and let $X_k = 1$ for $k > \tau_n$, which means $n = X_0 > X_1 > \dots > X_{\tau_n} = 1$.  The process $(X_k)_{k=0}^{\tau_n}$ is called the block-counting process associated with the beta coalescent.  


The result below shows that certain functionals of the block-counting process have an asymptotic stable law as $n \rightarrow \infty$. For this purpose, we consider the beta coalescent, constructed from $\Upsilon_n$ as in Section \ref{constsec}, and the $\alpha$-stable random measures $M_n=M^{\Theta_n}$ obtained from Poisson point processes $\Xi_n$ as described in Section \ref{stablesec}, with $E=(0,1]$, intensity \eqref{intensity} and $\Xi_n=\Theta_n$ (arising from $\Upsilon_n$ as described in Section \ref{twoPPP}). In view of \eqref{aControl} the control measure of $M_n$ is
\begin{align}\label{contmeas}
 \rho(dx)= \frac{\pi}{2 \sin(\frac{\pi \alpha}{2}) \Gamma({\alpha + 1})} \cdot\frac{\alpha (\alpha - 1)}{\Gamma(2 - \alpha)} \: dx = \frac{\pi (\alpha - 1)}{2 \sin(\frac{\pi \alpha}{2}) \Gamma(\alpha) \Gamma(2 - \alpha)}.
 \end{align}
We denote by $\mathscr F$ the set of all differentiable functions $f$ which for some $c > 0$ and $0 < \zeta < \frac{1}{\alpha}$ obey 
\begin{align}|f'(x)| \leq c x^{-\zeta - 1}
\label{fcond}
\end{align} 
for all $x \in (0, 1]$.
\begin{Theo}\label{onetime}
Let $(X_k)_{k=0}^{\tau_n}$ be the block-counting process associated with the beta coalescent, and 
let $f\in \mathscr F$. Then as $n \to \infty$,
\begin{align} \label{asex}
 \frac{1}{\alpha - 1} \sum_{k < \tau_n} f \bigg( \frac{X_k}{n} \bigg)  = n \int_0^1 f(x) \: dx  - n^{1/\alpha} \int_{(0,1]} f(x) \: M_n(dx) + o_P(n^{1/\alpha}),
  \end{align}
  where $M_n$ is an $\alpha$-stable random measure with control measure \eqref{contmeas}.
\end{Theo}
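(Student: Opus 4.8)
My plan is to realize the sum as a discrete version of $n\int_0^1 f$ together with a fluctuation term that is carried by the large mergers, and then to identify that fluctuation with the Poisson integral $\int f\,dM_n$ through the explicit coupling furnished by $\Theta_n$ (recall that $\Theta_n$, and hence $M_n=M^{\Theta_n}$, is built from the very process $\Upsilon_n$ that drives the coalescent). Write $D_{k+1}=X_k-X_{k+1}$ for the number of blocks lost in the $(k+1)$-st merger and $\mu(b)=E[D_{k+1}\mid X_k=b]$ for its conditional mean. A preliminary step, using the explicit beta rates $\lambda_{b,k}$ and the substitution $w=bp$, is to show that the total merger rate satisfies $\lambda_b\sim b^{\alpha}/\Gamma(\alpha+1)$ and that $\mu(b)\to 1/(\alpha-1)$, with $|\mu(b)-1/(\alpha-1)|=O(b^{-(\alpha-1)})$ (the correction coming from the bulk region $p=O(1)$). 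Since $E[D_{k+1}\mid\mathcal F_k]=\mu(X_k)$, I would then decompose
\begin{align*}
\frac{1}{\alpha-1}\sum_{k<\tau_n} f\Big(\tfrac{X_k}{n}\Big)
&= \sum_{k<\tau_n} f\Big(\tfrac{X_k}{n}\Big)(X_k-X_{k+1})
- \sum_{k<\tau_n} f\Big(\tfrac{X_k}{n}\Big)\big(D_{k+1}-\mu(X_k)\big) \\
&\quad + \sum_{k<\tau_n} f\Big(\tfrac{X_k}{n}\Big)\Big(\tfrac{1}{\alpha-1}-\mu(X_k)\Big),
\end{align*}
and call the three sums $(I)$, $(II)$, $(III)$.

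Term $(I)$ is a Riemann sum: because $n=X_0>\dots>X_{\tau_n}=1$, it approximates $\int_1^n f(b/n)\,db=n\int_{1/n}^1 f$. A Taylor estimate, using \eqref{fcond}, bounds its error by a constant times $n^{-1}\sum_k(X_k/n)^{-\zeta-1}D_{k+1}^2$, which in the rescaled coordinates of $\Theta_n$ (a point $(x,y)$ has $X_k/n\approx x$ and $D_{k+1}\approx n^{1/\alpha}y$) is of order $n^{2/\alpha-1}\sum_{(x,y)\in\Theta_n}x^{-\zeta-1}y^2$. After truncating to $\varepsilon\le y\le K$ and $x\ge\delta$ this sum is $O_P(1)$ by \eqref{intensity}, the excluded ranges being controlled separately (here $\zeta<\tfrac1\alpha$ handles $x\to 0$), so $(I)=n\int_0^1 f+o_P(n^{1/\alpha})$ since $2/\alpha-1<1/\alpha$. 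Term $(III)$ is deterministic up to the randomness of the $X_k$: bounding $|f(x)|\lesssim x^{-\zeta}$ and converting $\sum_k h(X_k)\approx(\alpha-1)\int_1^n h$, the rate $O(b^{-(\alpha-1)})$ gives $(III)=O(n^{\max(2-\alpha,\zeta)})=o_P(n^{1/\alpha})$, using $(\alpha-1)^2>0$ to see that both $2-\alpha<1/\alpha$ and $\zeta<1/\alpha$.

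Term $(II)$ is the heart of the matter; it is a martingale transform. Splitting the mergers into small ones (bounded $D_{k+1}$, i.e. $y<\varepsilon$) and large ones, the small mergers have bounded conditional variance and number $O(n)$, hence contribute $O_P(\sqrt n)=o(n^{1/\alpha})$ because $\alpha<2$. For the large mergers I would use the coupling with $\Theta_n$: there $f(X_k/n)D_{k+1}\approx n^{1/\alpha}f(x)y$, while the sum of the conditional means over large mergers matches exactly the centering used to build $M_n$ from $\Theta_n$ (the compensated limit \eqref{Zlim} with control measure \eqref{contmeas}). This yields $(II)=n^{1/\alpha}\int_{(0,1]}f\,dM_n+o_P(n^{1/\alpha})$, and together with $(I)$ and $(III)$ gives \eqref{asex}.

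The main obstacle is precisely this coupling estimate for $(II)$: since $\Theta_n$ and the coalescent share the driver $\Upsilon_n$, the difference $(II)-n^{1/\alpha}\int f\,dM_n$ is an explicit random variable, and showing it is $o_P(n^{1/\alpha})$ requires simultaneously controlling (a) the deviation of $N_n(r)/n$ from its deterministic profile $m(r)$ of Lemma \ref{Ntlem}, so that $X_k/n$ may be replaced by the first $\Theta_n$-coordinate $x$; (b) the binomial fluctuation of the number of blocks lost about its mean $N_n(r)\,p$; (c) the augmentation points of $\Upsilon'$, which enter $M_n$ but correspond to no real merger and must be shown to be negligible at the large-merger scale; and (d) the boundary region $x\to 0$ near the end of the coalescent. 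Each piece is handled by truncated first- and second-moment bounds, and the delicate point is the interchange of the limits $n\to\infty$ and the removal of the $(\varepsilon,K,\delta)$-truncations, which is exactly where the constraints $\zeta<\tfrac1\alpha$ and $1<\alpha<2$ are used.
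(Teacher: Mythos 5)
Your route is in substance the paper's own: your three-term decomposition is \eqref{sums} with an extra intermediate centering at the conditional mean $\mu(X_k)$; your rate $|\mu(b)-\gamma|=O(b^{-(\alpha-1)})$ is what Lemma \ref{UgivenX} delivers; your treatment of $(I)$ and $(III)$ parallels Proposition \ref{quadrature} and the $O\big(\sum_k f(X_k/n)X_k^{1-\alpha}\big)$ term inside Lemma \ref{bigUsmallU}; and your coupling program (a)--(d) for the large mergers --- the profile $m(r)$ of Lemma \ref{Ntlem}, binomial concentration, the augmentation points of $\Upsilon'$, and the region $x\to 0$ --- is exactly the content of Lemmas \ref{couplelem} and \ref{couplelem2}. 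So this is not a different proof; the question is whether each step is right, and one is not.

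The step that fails as stated is your dispatch of the small mergers in $(II)$: ``bounded conditional variance and number $O(n)$, hence $O_P(\sqrt n)$.'' Your cut is at $y<\varepsilon$, i.e.\ $D_{k+1}\le \varepsilon n^{1/\alpha}$ (note the internal inconsistency with ``bounded $D_{k+1}$''), and such jumps are not bounded: by Lemma \ref{UgivenX} and \eqref{Stir}, the conditional variance of one truncated increment is of order $\sum_{i\le \varepsilon n^{1/\alpha}} i^2 q_i \asymp (\varepsilon n^{1/\alpha})^{2-\alpha}$, which diverges with $n$. Over the $\sim(\alpha-1)n$ mergers the compensated small-jump sum then has variance of order $\varepsilon^{2-\alpha}n^{2/\alpha}$, i.e.\ standard deviation $\varepsilon^{(2-\alpha)/2}n^{1/\alpha}$ --- the \emph{same} order as the main term, and $\gg \sqrt n$ since $1/\alpha>1/2$. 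It is therefore not $o_P(n^{1/\alpha})$ at fixed $\varepsilon$; it is killed only in the iterated limit $n\to\infty$, $\varepsilon\downarrow 0$, simultaneously with $\Var\big(Z_n-(S_n(\varepsilon)-E S_n(\varepsilon))\big)=c\,\varepsilon^{2-\alpha}\int_0^1 f^\alpha\to 0$, and with the divergent compensator $d(\varepsilon)\asymp\varepsilon^{1-\alpha}$ of \eqref{constd} cancelling against $E S_n(\varepsilon)$ of \eqref{ESn}; this joint $\varepsilon$-bookkeeping is precisely Lemma \ref{bigUsmallU} and is what the $O_P(\sqrt n)$ shortcut hides. (If you instead meant truly bounded jumps $D\le K$, the variance bound holds, but then the complementary set $\{D>K\}$ contains order $nK^{-\alpha}$ mergers --- far more than the a.s.\ finitely many points of $\Theta_n$ with $y>\varepsilon$ --- and the middle regime $K<D\le\varepsilon n^{1/\alpha}$ again carries fluctuations of order $\varepsilon^{(2-\alpha)/2}n^{1/\alpha}$, so no coupling at the $\Theta_n$ scale is available.) Since your sketch already anticipates the delicate interchange of $n\to\infty$ with the removal of the truncations, the gap is repairable along the lines of Lemma \ref{bigUsmallU}, but the variance estimate you give is wrong, and it misplaces where the real work in the proof of Theorem \ref{onetime} lies. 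A smaller quibble of the same flavor: your $O(n^{2/\alpha-1})$ bound for the Taylor error in $(I)$ covers only the bulk $x\ge\delta$, $y\ge\varepsilon$; the full error, dominated by the end of the coalescent and the sub-threshold mergers, is $O_P(n^{\zeta})$ with $\zeta$ close to $1/\alpha$ (as in Proposition \ref{quadrature}), which is still $o_P(n^{1/\alpha})$, so there the conclusion survives.
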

The statement of Theorem \ref{onetime} is an asymptotic version of the equation
\begin{align}\label{sums}
\gamma \sum_{k < \tau_n} f \bigg( \frac{X_k}{n} \bigg)  = n \sum_{k < \tau_n} f \bigg( \frac{X_k}{n} \bigg) \frac{Y_k}{n} - n^{1/\alpha}\sum_{k < \tau_n} f \bigg( \frac{X_k}{n} \bigg)\frac{Y_k-\gamma}{n^{1/\alpha}} , 
\end{align}
where $\gamma = 1/(\alpha-1)$ and $Y_k=X_k-X_{k+1}$, $0\le k < \tau_n$. The proof consists of showing that the  sums appearing in the right-hand side of \eqref{sums} may asymptotically be replaced by the  integrals $ \int_0^1 f(x) \: dx$ and $\int_{(0,1]} f(x) \: M_n(dx)$, respectively.
\\

Since the control measure, and hence the distribution, of $M_n$
does not depend on~$n$, the asymptotic expansion \eqref{asex} in Theorem~\ref{onetime} directly imples convergence in distribution not only for a single $f \in  \mathscr F$, but also for finitely many $f_1,\ldots, f_d$.  To ease notation, let us define, for $f \in \mathscr F$ and $n \in \mathbb N$,\begin{align}\label{defJ}
\mathcal J_n(f) := n^{-1/\alpha} \bigg( \frac{1}{\alpha - 1} \sum_{k < \tau_n} f \bigg( \frac{X_k}{n} \bigg) - n \int_0^1 f(x) \: dx \bigg).
\end{align}
\begin{Cor} Let $f, f_1,\ldots, f_d \in \mathscr F$. Then
$$\mathcal J_n(f) \Rightarrow  S_{\alpha}(\sigma_f, -\beta_f, 0)$$
and $$(\mathcal J_n(f_1),\ldots, \mathcal J_n(f_d)) \Rightarrow ( - I(f_1), \ldots, - I(f_d)),$$
where $\sigma_f, \beta_f$ are as in~\eqref{sigmaBeta}, $I(f)$ is as in  \eqref{stint} with  an $\alpha$-stable random measure $M$ with control measure~\eqref{contmeas}, and $\Rightarrow$ denotes convergence in distribution as $n \rightarrow \infty$.
\end{Cor}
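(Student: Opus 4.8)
The plan is to read both assertions directly off the asymptotic expansion \eqref{asex} of Theorem~\ref{onetime}, with the stochastic integral on its right-hand side serving as the limit object. First I would insert \eqref{asex} into the definition \eqref{defJ}, obtaining
\[
\mathcal J_n(f) = -\int_{(0,1]} f(x)\,M_n(dx) + o_P(1) =: -I_n(f) + o_P(1),
\]
the $o_P(n^{1/\alpha})$ remainder becoming $o_P(1)$ after division by $n^{1/\alpha}$. The structural point, already flagged before the corollary, is that $M_n = M^{\Theta_n}$ has control measure \eqref{contmeas}, which does not depend on $n$; hence the law of $I_n(f)$ is constant in $n$ and equals that of $I(f) = \int_{(0,1]} f\,dM$, where $M$ is a single $\alpha$-stable random measure with control measure \eqref{contmeas}. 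The integral is well defined because $f \in \mathscr F$ forces $|f(x)| = O(x^{-\zeta})$ with $\zeta\alpha < 1$, so $\int_0^1 |f|^\alpha\,d\rho < \infty$.

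For the one-dimensional statement I would invoke \eqref{stint}--\eqref{sigmaBeta} to identify $I(f) \sim S_\alpha(\sigma_f, \beta_f, 0)$, and then use that negation reverses the skewness of a stable law with zero location parameter, $-I(f) \sim S_\alpha(\sigma_f, -\beta_f, 0)$; this is immediate from the characteristic function of Section~\ref{stablesec} on replacing $\theta$ by $-\theta$ and using $\mathrm{sgn}(-\theta) = -\mathrm{sgn}(\theta)$. Since $-I_n(f) \stackrel{d}{=} -I(f)$ for every $n$, in particular $-I_n(f) \Rightarrow -I(f)$, and Slutsky's theorem applied to the display above yields $\mathcal J_n(f) \Rightarrow -I(f) \sim S_\alpha(\sigma_f, -\beta_f, 0)$.

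For the joint statement the essential observation is that one and the same random measure $M_n$ enters the expansions for all of $f_1, \dots, f_d$, because $M_n$ is built from the single Poisson process $\Theta_n$ and is independent of the integrand. Hence the vector identity $(\mathcal J_n(f_1), \dots, \mathcal J_n(f_d)) = -(I_n(f_1), \dots, I_n(f_d)) + o_P(1)$ holds coordinatewise, and $(I_n(f_1), \dots, I_n(f_d))$ has, for every $n$, the law of $(I(f_1), \dots, I(f_d))$ assembled from the common $M$. The multivariate Slutsky theorem then gives $(\mathcal J_n(f_1), \dots, \mathcal J_n(f_d)) \Rightarrow (-I(f_1), \dots, -I(f_d))$. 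Equivalently, since $\mathcal J_n$ and $I$ are linear in $f$ and $\mathscr F$ is a linear space (for $x \le 1$ one has $x^{-\zeta_j - 1} \le x^{-\zeta - 1}$ with $\zeta = \max_j \zeta_j < 1/\alpha$, so any linear combination again satisfies \eqref{fcond}), the joint limit follows from the one-dimensional case applied to $g = \sum_j \theta_j f_j \in \mathscr F$ via the Cram\'er--Wold device.

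These steps are routine once Theorem~\ref{onetime} is in hand, so I expect no serious obstacle. The only two points needing care are the sign flip of the skewness under negation and, for the joint statement, the requirement that the $d$ limits be coupled through a single stable random measure $M$ --- guaranteed by the common-$\Theta_n$ construction of $M_n$, or equivalently by the linearity of the stable integral together with Cram\'er--Wold.
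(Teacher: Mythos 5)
Your proposal is correct and follows essentially the same route as the paper, which proves the corollary in a single remark: since the control measure \eqref{contmeas} of $M_n$ does not depend on $n$, the expansion \eqref{asex} divided by $n^{1/\alpha}$ gives $\mathcal J_n(f_j) = -I_n(f_j) + o_P(1)$ jointly with a common $M_n$, and Slutsky plus the skewness flip under negation yields both claims. Your closing Cram\'er--Wold alternative is a valid (and correctly justified, via linearity of $\mathcal J_n$ and of the stable integral and closedness of $\mathscr F$ under linear combinations) but redundant second route to the same joint limit.
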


\subsection{The evolving beta coalescent}\label{secev}

Now we transform the asymptotic expansion from Theorem \ref{onetime} to the evolving coalescent. Here it is convenient to use L\'evy processes.  Let
\begin{align}\label{defL}
L_n=(L_{n,s})_{-\infty < s < \infty} := L^{\Psi_n}  , \hspace{.3in} L_n' =(L_{n,x}')_{0\le x \le 1}:= L^{\Theta_n}
\end{align}
be the mean zero stable L\'evy processes with $L_{n,0}=L_{n,0}=0$ a.s. and with jumps $\Delta L_{n,s}=u$ and $\Delta L_{n,x}'=y$ for all points $(s,u)$ from $\Psi_n$ and $(x,y)$ from $\Theta_n$, respectively. For the time-reversals we write
\begin{align*}\hat L_{n,r} &:= L_{n,s-}\,, \quad s=-r,\, r \ge 0,\\
 \hat L'_{n,w} &:= L'_{n,x-}-L_{n,1}, \quad w=1-x\,, \, 0< x \le 1.
 \end{align*}
Then the mapping from \eqref{xymapping} translates into the following lemma.
\begin{Lemma}\label{Levysub}
Let $f:(0,1]\to \mathbb R$ be such that $\int_0^1 |f(x)|^\alpha\: dx < \infty$. Then
$$\int_0^\infty f(m(r)) m(r) \:d\hat L_{n,r} = \int_0^1 f(1-w)\:d\hat L'_{n,w}.$$
\end{Lemma}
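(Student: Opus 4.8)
Both sides are $\alpha$-stable stochastic integrals against the mean-zero L\'evy processes $\hat L_n$ and $\hat L'_n$, and since $\Theta_n$ is obtained from $\Psi_n|_{\mathbb R^-\times\mathbb R^+}$ by the \emph{deterministic} bijection $\phi\colon(s,u)\mapsto(x,y)=(m(-s),m(-s)u)$ of \eqref{xymapping}, both integrals are built from the same Poisson points on one probability space. The plan is therefore to prove the identity as an almost sure equality, matching the two compensated Poisson integrals jump by jump (together with their compensators) under the change of variables $x=m(r)$, $y=m(r)u$.

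First I would check that both integrands lie in the relevant $L^\alpha$ space, so that the integrals exist. By \eqref{psiintensity} the control measure of $\hat L_n$ in the variable $r$ is a constant multiple of Lebesgue measure $dr$, and by \eqref{intensity} that of $\hat L'_n$ in the variable $w$ is a constant multiple of $dw$. Inverting \eqref{xymapping} as in the text gives $dr=-\alpha(\alpha-1)\Gamma(\alpha)\,x^{-\alpha}\,dx$ along $x=m(r)$, and since $m(0)=1$, $m(\infty)=0$ by \eqref{mtdef}, the substitution $x=m(r)$ yields
\[
\int_0^\infty \bigl|f(m(r))\,m(r)\bigr|^\alpha\,dr=\alpha(\alpha-1)\Gamma(\alpha)\int_0^1|f(x)|^\alpha\,dx,
\]
the factor $m(r)^\alpha=x^\alpha$ cancelling the $x^{-\alpha}$ from the Jacobian. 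On the right, the substitution $x=1-w$ gives $\int_0^1|f(1-w)|^\alpha\,dw=\int_0^1|f(x)|^\alpha\,dx$. Both are finite under the hypothesis $\int_0^1|f|^\alpha\,dx<\infty$.

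The core step is the bookkeeping of the time reversal. The left-limit conventions $\hat L_{n,r}=L_{n,s-}$ (with $s=-r$) and $\hat L'_{n,w}=L'_{n,x-}-L'_{n,1}$ (with $w=1-x$) are precisely what is needed to make $\hat L_n$ and $\hat L'_n$ c\`adl\`ag mean-zero L\'evy processes in the forward variables $r\ge 0$ and $w\in[0,1]$. A direct computation of the jumps shows that a point $(-r_0,u_0)$ of $\Psi_n$ produces $\Delta\hat L_{n,r_0}=-u_0$, while a point $(x_0,y_0)$ of $\Theta_n$ produces $\Delta\hat L'_{n,w_0}=-y_0$ at $w_0=1-x_0$ — both reversals flip the sign, so the signs cancel. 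Under $\phi$ one has $x_0=m(r_0)$ and $y_0=m(r_0)u_0$, whence
\[
f\bigl(m(r_0)\bigr)\,m(r_0)\,\Delta\hat L_{n,r_0}=-f\bigl(m(r_0)\bigr)\,m(r_0)\,u_0=-f(x_0)\,y_0=f(1-w_0)\,\Delta\hat L'_{n,w_0}.
\]
Thus the two integrands agree on every corresponding jump. Writing each integral as the almost sure (and $L^2$) limit, as the jump-size cutoff $\varepsilon\downarrow 0$, of the compensated sum over jumps exceeding $\varepsilon$, it remains only to see that the compensators agree; this holds because the intensity of $\Theta_n$ is exactly the $\phi$-image of the intensity of $\Psi_n|_{\mathbb R^-\times\mathbb R^+}$, which is the content of the Jacobian computation leading to \eqref{intensity}. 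Matching integrands on matching jumps and matching compensators then gives the asserted almost sure identity.

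I expect the main obstacle to be exactly this time-reversal bookkeeping: verifying that the left-limit (and the $-L'_{n,1}$ centering) conventions produce genuine c\`adl\`ag L\'evy processes in the new time with correctly-signed jumps, so that the two stochastic integrals are driven by one and the same compensated Poisson random measure. Once that is established, the identity reduces to the deterministic change of variables $x=m(r)$, $y=m(r)u$ already recorded in \eqref{xymapping}.
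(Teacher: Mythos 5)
Your proposal is correct in substance, and it reaches the identity by a genuinely different mechanism than the paper, while sharing the two key computations: the sign bookkeeping under the time reversals, by which a point $(s_0,u_0)$ of $\Psi_n$ contributes $f(m(r_0))m(r_0)\,\Delta\hat L_{n,r_0}=-f(x_0)y_0=f(1-w_0)\,\Delta\hat L'_{n,w_0}$ via \eqref{xymapping}, and the fact that the Jacobian computation leading to \eqref{intensity} says the intensity of $\Theta_n$ is the image of that of $\Psi_n|_{\mathbb R^-\times\mathbb R^+}$. You then conclude by hand, realizing both integrals as limits of compensated truncated sums over one and the same Poisson configuration. The paper instead packages these facts in a stochastic-calculus argument: it sets $J_r=\int_0^r f(m(q))m(q)\,d\hat L_{n,q}-\int_0^{1-m(r)}f(1-w)\,d\hat L'_{n,w}$, notes that both pieces are martingales for the common filtration $\sigma(\Psi_n|_{[-r,0]\times\mathbb R^+})=\sigma(\Theta_n|_{[m(r),1]\times\mathbb R^+})$, so $J$ is a local martingale, and that $\Delta J\equiv0$ by exactly your jump computation; since the drivers have no Brownian component, $[J]_\infty=\sum_{r\ge0}(\Delta J_r)^2=0$, whence $J\equiv J_0=0$. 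The paper's route yields the full pathwise identity with no cutoff bookkeeping at all; yours is more elementary, stays at the level of the Section \ref{twoPPP} construction, and makes the well-definedness explicit — your change-of-variables identity $\int_0^\infty|f(m(r))m(r)|^\alpha\,dr=\alpha(\alpha-1)\Gamma(\alpha)\int_0^1|f(x)|^\alpha\,dx$ is a worthwhile addition, since the paper only asserts that the integrals are well defined.

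One detail in your limiting argument needs tightening. If ``jumps exceeding $\varepsilon$'' means truncation by the integrator's jump sizes, the retained point sets on the two sides differ: $u_i>\varepsilon$ is not the same event as $y_i=m(r_i)u_i>\varepsilon$, so the truncated sums are not literally equal and the jump-by-jump comparison does not close. Truncate instead by the size of the contribution, $|f(x_i)y_i|>\varepsilon$, which under \eqref{xymapping} coincides with $|f(m(r_i))m(r_i)u_i|>\varepsilon$; then the truncated sums are identical random variables, the compensators agree by the change of variables, and the remainder has variance $c\,\varepsilon^{2-\alpha}\int_0^1|f(x)|^\alpha\,dx\to0$ — precisely the computation the paper carries out for $S_n(\varepsilon)$ of \eqref{finitesum} in the proof of Theorem \ref{onetime}. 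A small further caution: the integrals themselves are $\alpha$-stable with $\alpha<2$, hence not in $L^2$, so ``$L^2$ limit'' should be read as the statement that the differences from the truncated compensated sums have vanishing variance; convergence in probability (or a.s.\ along sequences $\varepsilon_k\downarrow0$, by independence of the successive compensated increments) is what you get and is all you need. With that fix your argument is complete.
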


\begin{proof}
The assumption on $f$ guarantees that the integrals are well-defined. The processes $(\hat L_{n,r})_{r\ge 0}$ and $(\hat L'_{n,m(r)})_{r\ge 0}$ are  mean zero L\'evy processes, and hence   martingales with respect to the filtration $$\mathcal F_r := \sigma (\Psi_n \mid_{[-r,0]\times \mathbb R^+}) = \sigma(\Theta_n \mid_{[m(r),1]\times \mathbb R^+})\, , \quad r\ge 0.$$
Consequently
\begin{align*}  J_r:= \int_0^r f(m(q)) m(q) \: d\hat L_{n,q}-\int_0^{1-m(r)} f(1-w)\: d\hat L'_{n,w}\, , \quad r\ge 0,
\end{align*}
is a local martingale. Jumps can only occur at points $(r,u)$ in $\Psi_n$; because of \eqref{xymapping} they vanish:
$$\Delta J_r = -f(m(r))m(r) u+f(x)y = 0.$$
Thus, $J$ is a.s. continuous. Moreover, since the underlying processes are L\'evy processes without a Brownian component,  the quadratic variation of $J$ is $[J]_\infty = \sum_{r\ge 0} (\Delta J_r)^2 = 0$ a.s. Thus $J_\infty = J_0 = 0$ a.s. 
\end{proof}

From \eqref{defL} and \eqref{intL} it follows that $M_n=M^{\Theta_n}$ satisfies
\begin{align*}
\int_{(0,1]} f(x)\: M_n(dx)= \int_0^1 f(x)\; d L'_{n,x} = -\int_0^1 f(1-w)\; d\hat L'_{n,w}.
\end{align*}
Now we apply Lemma \ref{Levysub} to get
\begin{align}
\int_{(0,1]} f(x)\: M_n(dx)= -\int_0^\infty f(m(r))m(r)\; d L_{n,-r}.
\label{ML}
\end{align}

Let us now proceed to consider the evolving coalescent  $(\mathcal T_n(t), t \in \mathbb R)$ described in Section \ref{constsec}.  For each  $s \in \mathbb R$ and $n \in \mathbb N$, we denote the block counting process of  the coalescent tree $\mathcal T_n(n^{1-\alpha}s)$ by $(X_k^s)_{k=0}^{\tau_n^s}$. 
By shifting the origin of the scaled time to the time point $s$ and re-centering the process $L$ at this new time origin (which does not affect its increments), we can apply Theorem~\ref{onetime} together with \eqref{ML} and conclude that
$$\frac{1}{\alpha - 1} \sum_{k < \tau_n^s} f \bigg( \frac{X_{k}^s}{n} \bigg)  = n \int_0^1 f(x) \: dx  + n^{1/\alpha} \int_{0}^\infty f(m(r))m(r)\; dL_{n,s-r} + o_P(n^{1/\alpha}).
$$
Writing $\mathcal J_f^n(s)$ for the random variable \eqref{defJ} with $(X_k)=(X_k^0)$ replaced by $(X_k^s)$, we thus obtain
\begin{align*} 
\mathcal J_f^n(s) 
= \int_{0}^{\infty} f(m(r)) m(r)\: dL_{n,s-r} + o_P(1).
\end{align*} 
Since the distribution of the Poisson point process $\Psi_n$, and hence also that of  the L\'evy process $L_n = L^{\Psi_n}$, does not depend on $n$, we obtain the following result for the evolving beta coalescent.
\begin{Cor} \label{Cor2}
For $f \in \mathscr F$ and $s \in \mathbb R$, let $\mathcal J_{n,s}(f)$ be as in  \eqref{defJ}, but now evaluated at the coalescent tree $\mathcal T_n(n^{1-\alpha}s)$ instead of $\mathcal T_n(0)$. Then the sequence of stationary processes $(\mathcal J_{n,s}(f))_{-\infty < s < \infty}$,  $n \ge 1$,
converges as $n \to \infty$ in finite-dimensional distributions  to the moving average process
\[  \int_{0}^{\infty} f(m(r)) m(r)\: dL_{s-r} , \hspace{.3in}  -\infty < s < \infty, \]
where $m$ is given by  \eqref {mtdef} and and $(L_s)_{-\infty < s < \infty }$ is a mean zero  L\'evy process with $L_0 =0$ and L\'evy measure given by \eqref{Levymeas}.
\end{Cor}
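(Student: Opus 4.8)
The plan is to reduce the statement to the one-point identity that Theorem~\ref{onetime} already furnishes, and then to exploit the fact that a \emph{single} mean-zero L\'evy process $L_n=L^{\Psi_n}$ drives the approximating moving average at \emph{all} scaled times $s$ simultaneously. First I would fix $s\in\mathbb R$ and apply Theorem~\ref{onetime} to the coalescent read backwards from scaled time $s$, that is, to $\mathcal T_n(n^{1-\alpha}s)$. Because the construction in Section~\ref{constsec} is time-homogeneous, so that $\Upsilon_n$ and hence $\Psi_n$ are stationary under shifts of the time coordinate, the block-counting process $(X_k^s)_k$ stands in the same relation to the points of $\Psi_n$ in $(-\infty,s]$ as $(X_k^0)_k$ does to the points in $(-\infty,0]$. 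Shifting the time origin to $s$ therefore turns Theorem~\ref{onetime} into the assertion that $\mathcal J_{n,s}(f)=-\int_{(0,1]}f(x)\,M_n^{(s)}(dx)+o_P(1)$, where $M_n^{(s)}$ is the $\alpha$-stable random measure attached to the shifted analogue $\Theta_n^{(s)}$ of $\Theta_n$, again with control measure \eqref{contmeas}. Invoking \eqref{ML} with origin $s$ rewrites the right-hand side as $\int_0^\infty f(m(r))m(r)\,dL_{n,s-r}+o_P(1)$; this is exactly the one-point relation derived just before the statement.

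The decisive point is that the moving-average integral is assembled from the jumps of the \emph{one} process $L_n$ over $(-\infty,s]$, and that the same $L_n$ serves every $s$. Hence, fixing $s_1<\dots<s_d$, I would write
\[
\bigl(\mathcal J_{n,s_1}(f),\dots,\mathcal J_{n,s_d}(f)\bigr)
=\Bigl(\int_0^\infty f(m(r))m(r)\,dL_{n,s_j-r}\Bigr)_{j=1}^d
+\bigl(\varepsilon_n(s_1),\dots,\varepsilon_n(s_d)\bigr),
\]
with each $\varepsilon_n(s_j)=o_P(1)$; as there are finitely many indices, the error vector is $o_P(1)$ as well. The leading vector is a fixed measurable functional of $L_n$ alone, so its law is a functional of the law of $L_n$; and, as noted after \eqref{ML}, the laws of $\Psi_n$ and of $L_n=L^{\Psi_n}$ do not depend on $n$ and coincide with the law of the mean-zero L\'evy process $L$ with L\'evy measure \eqref{Levymeas}. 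Consequently the leading vector has, for \emph{every} $n$, the same law as $\bigl(\int_0^\infty f(m(r))m(r)\,dL_{s_j-r}\bigr)_{j=1}^d$, which is precisely the $d$-dimensional distribution of the claimed limiting moving-average process. A converging-together (Slutsky) argument — a sequence of fixed law perturbed by an $o_P(1)$ term — then yields the convergence in finite-dimensional distributions.

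Two routine verifications remain. The limiting integral is well defined, i.e. $\int_0^\infty|f(m(r))m(r)|^\alpha\,dr<\infty$, precisely because $f\in\mathscr F$: the bound \eqref{fcond} integrates to $|f(x)|\le Cx^{-\zeta}$ on $(0,1]$, so $|f(m(r))m(r)|\le C\,m(r)^{1-\zeta}$, and since $m(r)$ decays like $r^{-1/(\alpha-1)}$ by \eqref{mtdef} the integrand decays like $r^{-\alpha(1-\zeta)/(\alpha-1)}$, which is integrable at infinity exactly when $\zeta<1/\alpha$; this is the same condition under which Lemma~\ref{Levysub} was applied. Stationarity of $(\mathcal J_{n,s}(f))_s$ follows from the shift-invariance of $\Psi_n$, and that of the limit from the stationary increments of $L$. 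As for the main obstacle: granted Theorem~\ref{onetime} and Lemma~\ref{Levysub}, the corollary is essentially a packaging argument, and the only genuinely delicate step is the passage from the marginal to the \emph{joint} statement. Handling the joint law directly through the distinct, dependent random measures $M_n^{(s_1)},\dots,M_n^{(s_d)}$ would be awkward; the point of routing every time point through the single driving process $L_n$ via the reversal identity \eqref{ML}/Lemma~\ref{Levysub} is that the joint distribution becomes a fixed functional of one $n$-independent object, after which the finite-dimensional convergence is transparent.
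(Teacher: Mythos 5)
Your proposal is correct and follows essentially the same route as the paper: shift the time origin to $s$, apply Theorem~\ref{onetime}, route each marginal through the single driving L\'evy process $L_n=L^{\Psi_n}$ via \eqref{ML}/Lemma~\ref{Levysub}, and conclude from the $n$-independence of the law of $\Psi_n$ together with a converging-together argument. Your explicit verifications (the $\alpha$-integrability of $r\mapsto f(m(r))m(r)$, equivalent under the substitution $x=m(r)$ to $\int_0^1|f|^\alpha\,dx<\infty$, and the Slutsky step for the finite-dimensional error vector) are points the paper leaves implicit, and they are accurate.
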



To understand better how these functionals of the beta coalescent evolve over time, note that the stable random variable $\int_{(0,1]} f(x) \: M_n(dx)$ from Theorem \ref{onetime}, which gives the limit of the functional $\mathcal J_{n,0}(f)$, is a function of the Poisson process $\Theta_n$, and is therefore also a function of the Poisson process $\Psi_n$.  Likewise, the stable random variable that gives the limit of $\mathcal J_{n,s}(f)$ can be expressed as a function of a Poisson process $\Theta_n^s$ and as a function of a Poisson process $\Psi_n^s$.  The Poisson process $\Psi_n^s$ can be obtained from $\Psi_n$ by a simple time shift.  If $(-r, u)$ is a point of $\Psi_n$, then $(-r -s , u)$ is a point of $\Psi_n^s$.  We obtain the Poisson process $\Theta_n^s$ by applying the transformation $(s-r, u) \mapsto (x, y) = (m(r), m(r)u)$ to the points of $\Psi_n|_{(-\infty, s]}$, where $m(r)$ is again defined by \eqref{mtdef}.  See Figure 3 for an illustration of how these point process evolve as $s$ increases.
\begin{center}
\begin{figure}[H]\label{shift}
\psfrag{y}{$\Upsilon'$}
\psfrag{0}{$t=0$}
\psfrag{2}{$0$}
\psfrag{t}{$t$}
\psfrag{u}{$u$}
\psfrag{p}{$p$}
\psfrag{s}{$s$}
\psfrag{1}{$p=1$}
\psfrag{q}{$\Psi_n$}
\psfrag{t}{$t$}
\psfrag{u}{$u$}
\psfrag{z}{$\Theta_n^0$}
\psfrag{a}{$\Theta_n^s$}
\psfrag{y}{$y$}
\psfrag{x}{$x$}
\psfrag{1}{$1$}
\psfrag{0}{$0$}
\psfrag{r}{$r$}
\hspace{1cm}\includegraphics[height=11cm, width=12cm]{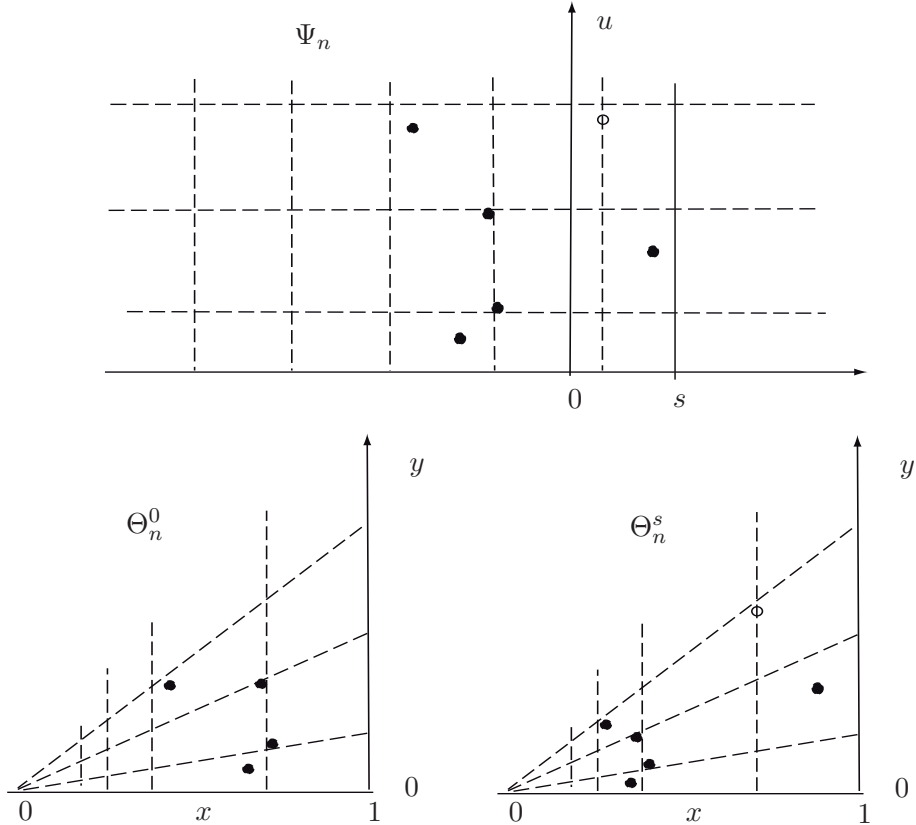}
\caption{The point process $\Theta_n^s$ arises from $\Psi_n |_{ (-\infty,s] \times \mathbb R^+}$ through the transformation $(s-r,u)\mapsto (x,y) = (m(r), m(r)u)$. As $s$ increases above $0$, the points of $\Theta_n^0$ wander down to the left towards $(0,0)$, and new points wander in from the right.}
\end{figure}
\end{center}

\subsection{Functionals of the beta coalescent}\label{Fubecoa}

In this section, we consider three functionals of the beta coalescent: the number of collisions, the total branch length, and the total length of external branches.  We observe how Theorem~\ref{onetime} allows us to recover known results for the asymptotic distributions of these quantities for the static beta coalescent.  Then Corollary \ref{Cor2} allows us to describe how these functionals behave over time in the evolving beta coalescent.  We also obtain a new result about the ratio of the external branch length to the total branch length, which could be of interest for biological applications.

\begin{Exm}{\em Consider the number $\tau_n$ of collisions before just a single block remains.  Because $$\tau_n = \sum_{k < \tau_n} 1 = \frac{1}{\alpha - 1} \sum_{k < \tau_n} (\alpha - 1),$$ we can apply directly the result of Theorem \ref{onetime} with $f(x) = \alpha - 1$ for all $x \in (0,1]$.  We get that for $1 < \alpha < 2$,
\begin{align}
n^{-1/\alpha}(\tau_n - (\alpha - 1) n) \Rightarrow S(\sigma_1,-1,0),
\label{taunlim}
\end{align}
where $$\sigma_1 = \bigg( \frac{\pi (\alpha - 1)^{1 + \alpha}}{2 \sin( \frac{\pi \alpha}{2}) \Gamma(\alpha) \Gamma(2 - \alpha)} \bigg)^{1/\alpha}.$$  This agrees with the result of Lemma 4 in \cite{ksw}, where the limit on the right-hand side of (\ref{taunlim}) is expressed as $c_1 Z$ for $-Z$ satisfying (\ref{Zprop}) and $c_1 = (\alpha - 1)^{1 + 1/\alpha}/\Gamma(2 - \alpha)^{1/\alpha}$.  This result had also been shown in \cite{ddsj, gy, im}, and the equivalence between the two ways of expressing the limit can be seen from (\ref{sigmafromc}).

Because we use this result in our proof of Theorem \ref{onetime}, we have not obtained here another independent proof of this result. The benefit is that our approach allows us to examine the common distribution of $\tau_n$ and other functionals.  Also, Corollary \ref{Cor2} with $f(x) = \alpha - 1$ allows us to understand how the total number of collisions changes over time for the evolving beta coalescent.  In particular, we see that the limit process is a stationary stable process that can be expressed, in a relatively simple way, as a moving average process.}
\end{Exm}

\begin{Exm}\label{Lnex}
{\em Consider next the total length $\mathcal L_n$ of all branches in the coalescent tree.  This quantity is of interest in Biology because the total branch length should be approximately proportional to the number of mutations observed in a sample of $n$ individuals.  Note that $$\mathcal L_n = \sum_{k < \tau_n} X_k (R_{k+1} - R_k).$$  Define also $$\mathcal L_n' = \sum_{k < \tau_n} \frac{X_k}{\lambda_{X_k}}, \hspace{.3in} \mathcal L_n'' = \alpha \Gamma(\alpha) \sum_{k < \tau_n} X_k^{1 - \alpha}.$$  Lemma 2.2 in \cite{ddsj} implies that as $m \rightarrow \infty$, 
\begin{align}\label{rate}
\lambda_m = \frac{1}{\alpha \Gamma(\alpha)} m^{\alpha} + O(m^{\alpha - 1}).
\end{align}  
Therefore, there is a constant $c > 0$ such that
\begin{equation}\label{Lnprime}
|\mathcal L_n' - \mathcal L_n''| \leq \sum_{k < \tau_n} X_k \bigg| \frac{1}{\lambda_{X_k}} - \frac{\alpha \Gamma(\alpha)}{X_k^{\alpha}} \bigg| \leq\sum_{m=1}^n m \bigg| \frac{1}{\lambda_m} - \frac{\alpha \Gamma(\alpha)}{m^{\alpha}} \bigg| \leq \sum_{m=1}^n m \cdot c m^{-1-\alpha} = O(1).
\end{equation}
Also, conditional on $\sigma(X) = \sigma(X_0, X_1, \dots, X_{\tau_n})$, the distribution of $X_k(R_{k+1} - R_k)$ is exponential with rate parameter $\lambda_{X_k}$.  It follows that
\begin{equation}\label{mean2prime}
E(\mathcal L_n - \mathcal L_n'\mid \sigma(X)) = 0
\end{equation}
and
\begin{equation}\label{var2prime}
\mbox{Var}(\mathcal L_n - \mathcal L_n'\mid \sigma(X)) = \sum_{k=0}^{n-1} X_k^2 1_{k < \tau_n} \cdot \frac{1}{\lambda_{X_k}^2} \leq \sum_{m=1}^n \frac{m^2}{\lambda_m^2} = O(1 \vee n^{3 - 2 \alpha}).
\end{equation}
It now follows from (\ref{Lnprime}), (\ref{mean2prime}), (\ref{var2prime}), and Chebyshev's Inequality that if $1 < \alpha < \frac{1}{2}(1 + \sqrt{5})$, so that $1 + \alpha - \alpha^2 > 0$, we have $$n^{\alpha - 1 - 1/\alpha} (\mathcal L_n - \mathcal L_n'') \Rightarrow 0.$$
Therefore, we may replace $\mathcal L_n$ by $\mathcal L_n''$ in asymptotic calculations.  Because $$n^{\alpha - 1} \mathcal L_n'' = \frac{1}{\alpha - 1} \sum_{k < \tau_n} \alpha (\alpha - 1) \Gamma(\alpha) \bigg( \frac{X_k}{n} \bigg)^{1-\alpha},$$ when $1 < \alpha < \frac{1}{2}(1 + \sqrt{5})$ we can apply Theorem \ref{onetime} with $f(x) = \alpha (\alpha - 1) \Gamma(\alpha) x^{1 - \alpha}$.  We get 
$$n^{\alpha - 1 - 1/\alpha} \bigg( \mathcal L_n - \frac{\alpha (\alpha - 1) \Gamma(\alpha) n^{2 - \alpha}}{2 - \alpha} \bigg) \Rightarrow  S_{\alpha}(\sigma_2, -1, 0),$$ 
where 
$$\sigma_2 = \bigg( \frac{\pi \alpha^{\alpha} (\alpha - 1)^{1 + \alpha} \Gamma(\alpha)^{\alpha - 1}}{2 \sin( \frac{\pi \alpha}{2}) \Gamma(2 - \alpha) (1 + \alpha - \alpha^2)} \bigg)^{1/\alpha},$$
which agrees with part (i) of Theorem 1 in \cite{kersting}.  It also follows from Corollary \ref{Cor2} that for the evolving beta coalescent, the evolution of the total branch length, scaled as above, converges in the sense of finite-dimensional distributions to a stationary stable process.}
\end{Exm}

\begin{Exm}\label{lnex}
{\em Consider also the total length $\ell_n$ of all external branches in the tree.  This quantity is also of interest in Biology, as it should be approximately proportional to the number of mutations that appear on just one individual in a sample of $n$ individuals.  It is shown in the proof of Theorem 1 in \cite{ksw} that $$\ell_n = \alpha (\alpha - 1)^2 \Gamma(\alpha) n^{2 - \alpha} + \alpha (2 - \alpha) \Gamma(\alpha) n^{1-\alpha} \tau_n + o_P(n^{1 + 1/\alpha - \alpha}).$$  Therefore, $$n^{\alpha - 1} \ell_n - \alpha (\alpha - 1)^2 \Gamma(\alpha) n = \frac{1}{\alpha - 1} \sum_{k < \tau_n} \alpha (\alpha - 1) (2 - \alpha) \Gamma(\alpha) + o_P(n^{1/\alpha}),$$ so for $1 < \alpha < 2$, we can apply Theorem \ref{onetime} with $f(x) = \alpha (\alpha - 1) (2 - \alpha) \Gamma(\alpha)$ to get
$$n^{\alpha - 1 - 1/\alpha} (\ell_n - \alpha (\alpha - 1) \Gamma(\alpha) n^{2-\alpha}) \Rightarrow S_{\alpha}(\sigma_3, -1, 0),$$ where  $$\sigma_3 = \bigg( \frac{\pi \alpha^{\alpha} (\alpha - 1)^{1 + \alpha} (2 - \alpha)^{\alpha} \Gamma(\alpha)^{\alpha - 1}}{2 \sin(\frac{\pi \alpha}{2}) \Gamma(2 - \alpha)}\bigg)^{1/\alpha},$$ in agreement with Theorem 1 of \cite{ksw}.}
\end{Exm}

\begin{Exm}\label{lnLn}
{\em Finally, we consider the quantity $\ell_n/\mathcal L_n$, which in the biological setting should be approximately equal to the proportion of mutations that appear on only one individual.  This ratio is potentially useful for drawing inferences about the genealogy of a population from data, in part because the value that we expect for this ratio does not depend on the mutation rate, which is often unknown.  Indeed, it follows from results in \cite{bbs2} that the parameter $\alpha$ in the beta coalescent can be consistently estimated by the quantity $2 - \ell_n/\mathcal L_n$.

Assume that $1 < \alpha < \frac{1}{2} (1 + \sqrt{5})$.  From the discussion in Examples \ref{Lnex} and \ref{lnex}, we see that $$n^{\alpha - 1 - 1/\alpha} \mathcal L_n = \frac{\alpha (\alpha - 1) \Gamma(\alpha)}{2 - \alpha} n^{1-1/\alpha} - Z_1 + o_P(1),$$ where $$Z_1 = \int_0^1 \alpha (\alpha - 1) \Gamma(\alpha) x^{1-\alpha} \: M_n(dx),$$ and likewise $$n^{\alpha - 1 - 1/\alpha} \ell_n = \alpha (\alpha - 1) \Gamma(\alpha) n^{1 - 1/\alpha} - Z_2 + o_P(1),$$ where $$Z_2 = \int_0^1 \alpha (\alpha - 1)(2 - \alpha) \Gamma(\alpha) \: M_n(dx).$$  Therefore,
\begin{align*}
\frac{\ell_n}{\mathcal L_n} &= \frac{\alpha (\alpha - 1)\Gamma(\alpha) - n^{-1 + 1/\alpha} Z_2 + o_P(n^{-1 + 1/\alpha})}{\frac{\alpha (\alpha - 1)\Gamma(\alpha)}{2-\alpha} - n^{-1 + 1/\alpha} Z_1 + o_P(n^{-1 + 1/\alpha})}   \\
&= (2 - \alpha) + n^{-1 + 1/\alpha} \bigg( \frac{(2 - \alpha)^2}{\alpha (\alpha - 1)\Gamma(\alpha)} Z_1 -\frac{2 - \alpha}{\alpha (\alpha - 1)\Gamma(\alpha)} Z_2  \bigg) + o_P(n^{-1 + 1/\alpha}) \\
&=    (2 - \alpha) + n^{-1 + 1/\alpha}  \int_0^1 (2 - \alpha)^2(x^{1-\alpha} - 1) \: M_n(dx) + o_P(n^{-1 + 1/\alpha}).
\end{align*}
It follows that $$n^{1 - 1/\alpha} \bigg( \frac{\ell_n}{\mathcal L_n} - (2 - \alpha) \bigg) \Rightarrow S_{\alpha}(\sigma_4, 1, 0),$$ where 
$$\sigma_4 = (2 - \alpha)^2 \bigg( \frac{\pi (\alpha - 1)}{2 \sin(\frac{\pi \alpha}{2}) \Gamma(\alpha)\Gamma(2 - \alpha)} \int_0^1 (x^{1-\alpha} - 1)^{\alpha} \: dx \bigg)^{1/\alpha}.$$
Using the substitution $y = x^{\alpha-1}$, the integral transforms to a beta integral:
$$\int_0^1 (x^{1-\alpha} - 1)^{\alpha} \: dx = \frac 1{\alpha-1}\int_0^1 (1-y)^\alpha y^{\frac{2-\alpha^2}{\alpha-1}}dy = \frac{\Gamma(\alpha+1) \Gamma(\frac{\alpha+1-\alpha^2}{\alpha-1})}{(\alpha-1)\Gamma(\frac \alpha{\alpha-1})}.$$
Altogether,
$$\sigma_4 = (2 - \alpha)^2 \bigg( \frac{\pi \alpha}{2 \sin(\frac{\pi \alpha}{2}) } \frac{ \Gamma(\frac{\alpha+1-\alpha^2}{\alpha-1})}{\Gamma(2 - \alpha)\Gamma(\frac \alpha{\alpha-1})}\bigg)^{1/\alpha}.$$
If $\frac{1}{2}(1 + \sqrt{5}) \leq \alpha < 2$, then the fluctuations in $\mathcal L_n$ are of a higher order of magnitude than the fluctuations of $\ell_n$, so the asymptotic distribution of $\ell_n/\mathcal L_n$ is determined by the asymptotics of $\mathcal L_n$ given in Theorem 2 of \cite{ksw}.  In particular, when $\frac{1}{2}(1 + \sqrt{5}) < \alpha < 2$, the asymptotic distribution of $\ell_n/\mathcal L_n$ is no longer a stable law.}
\end{Exm}

\section{Proofs}\label{proofsec}

Let us remark in advance that for $f$ satisfying \eqref{fcond} with $\zeta < 2$ by linearity  we may and will assume the following properties:
$f(x) \ge 1$ for all $x\in (0,1]$, $f(x)$ is monotonically decreasing and $x^2f(x)$ is monotonically increasing. 

Indeed, for any $f$ satisfying the condition $|f'(x)| \le c x^{-\zeta -1}$, $x \in (0,1]$, we may write $f=f_1-f_2$ with
\[ f_1(x)= f(x) + 3c\zeta^{-1}x^{-\zeta} + d, \hspace{.3in} f_2(x)= 3c\zeta^{-1}x^{-\zeta} + d .\]
Then $f_1$ and $f_2$ fulfil these three requirements, if we let $d= 1- f(1)\wedge 0$. For $f_2$ this is obvious, since $\zeta < 2$. Furthermore $ f_1'(x) \le - 2cx^{-\zeta-1}$. This implies that $f_1$ is decreasing and also $f_1(x) \ge 1$ for all $x$, since $f_1(1)\ge 1$. Moreover $d+f(1)\ge 0$, thus $f_1(1) \ge   3c\zeta^{-1} $ and 
\[f_1(x)= f_1(1)-\int_x^1 f_1'(y)\: dy \ge 2c\zeta^{-1}+\int_x^1 2cy^{-\zeta-1}\: dy =  2c\zeta^{-1} x^{-\zeta}\ge 2cx^{-\zeta}.\]
Taking also into account $f_1'(x) \ge -4c x^{-\zeta-1}$ we obtain
\[ \frac{d}{dx} x^2f_1(x) = 2xf_1(x)+ x^2 f_1'(x) \ge 0. \]
This gives the assertion.

Note also for $f$ satisfying \eqref{fcond}, there is a positive constant $c$ such that
\begin{equation}\label{fbound}
f(x) \leq c x^{-\zeta}
\end{equation}
for all $x \in (0, 1]$ and therefore $$\int_0^1 f(x)^{\alpha} \: dx < \infty.$$
To facilitate notation we adopt, here and throughout the rest of the paper, the convention that $c>0$ denotes a constant, only dependent on $\alpha$, which may change its value from term to term.

\subsection{The number of blocks for the beta coalescent}\label{numblocksec}

We assemble here some results about the evolution of the number of blocks for the beta coalescent.  We adopt the notation of Section \ref{resultsec}, so that $\tau_n$ is the total number of mergers and $(X_k)_{k=0}^{\tau_n}$ is the block counting process. Let $$Y_k = X_k - X_{k+1},\hspace{.3in} k \ge 0,$$ which are the numbers of blocks lost during the  mergers.  

Define
\begin{align*} q_i= \frac{\alpha}{\Gamma(2-\alpha)} \frac{\Gamma(i+1-\alpha)}{\Gamma(i+2)}, \quad i \ge 1. 
\end{align*}
The numbers $q_i$ are the weights of a probability distribution on $\mathbb N$ (see \cite{kersting}).  
From Stirling's formula
\begin{align}\label{Stir}
q_i  = \frac \alpha{\Gamma(2-\alpha)} i^{-\alpha -1}(1+o(1))
\end{align}
for $i \to \infty$. Let
\begin{equation}\label{pkmean}
\gamma = \sum_{i \ge 1} iq_i = \frac{1}{\alpha - 1},
\end{equation}
where the last equality is formula (5) in \cite{kersting}.  See also \cite{beleg, ddsj} for a discussion of this probability distribution.  

It has been known since the work of Bertoin and Le Gall \cite{beleg} that the distribution of the random variables $Y_0, Y_1, \dots$ is well approximated by $(q_i)_{i=1}^{\infty}$.  The next result gives a bound on the accuracy of this approximation.

\begin{Lemma}\label{UgivenX}
There is a number $c<\infty$ such that for $j \ge 2$ and $1\le i< j$
\[ \big| P(Y_0=i \mid X_0=j) - q_i \big| \le \frac{c i q_i}{j} \]
and thus
\[ P(Y_0=i \mid X_0=j)  \le (1+c) q_i. \]
\end{Lemma}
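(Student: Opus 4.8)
The plan is to reduce the lemma to a single sharp comparison of two sequences of Gamma ratios, and then to control the normalizing constant of the first-jump distribution.

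First I would write down the exact law of the first jump. When $X_0=j$, the block count drops by $i$ precisely when $i+1$ of the $j$ blocks coalesce, so the competing-exponentials description of the $\Lambda$-coalescent gives
\[
P(Y_0=i\mid X_0=j)=\frac{\binom{j}{i+1}\lambda_{j,i+1}}{\lambda_j},\qquad 1\le i<j.
\]
Evaluating $\lambda_{j,k}=\int_0^1 p^{k-2}(1-p)^{j-k}\,\Lambda(dp)$ for the Beta$(2-\alpha,\alpha)$ measure turns it into a Beta integral $B(k-\alpha,j-k+\alpha)$, and cancelling factorials against the definition of $q_i$ yields the clean identity
\[
P(Y_0=i\mid X_0=j)=\frac{q_i\,a_{j-1-i}}{Z_j},\qquad a_m:=\frac{\Gamma(m+\alpha)}{\Gamma(m+1)},\quad Z_j:=\sum_{i'=1}^{j-1}q_{i'}\,a_{j-1-i'}
\]
(the same computation gives $\lambda_j=\tfrac{j}{\Gamma(\alpha+1)}Z_j$). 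Both assertions of the lemma then reduce to the single estimate $\bigl|a_{j-1-i}/Z_j-1\bigr|\le c\,i/j$.

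Next I would collect the elementary facts about $a_m$. The ratio $a_{l+1}/a_l=(l+\alpha)/(l+1)$ gives the telescoping increment $a_{l+1}-a_l=a_l(\alpha-1)/(l+1)$, which together with the Stirling two-sided bound $c_1(m+1)^{\alpha-1}\le a_m\le c_2(m+1)^{\alpha-1}$ yields $0\le a_{m'}-a_m\le c(m'-m)(m+1)^{\alpha-2}$ for $m\le m'$. Since $a_m$ is increasing, the single term $q_1a_{j-2}$ already forces $Z_j\ge c\,j^{\alpha-1}$, while $Z_j\le a_{j-2}\sum_{i'\ge1}q_{i'}\le c_2 j^{\alpha-1}$; hence $Z_j\asymp j^{\alpha-1}$. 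Combined with $a_{j-1-i}\le a_{j-2}\le c_2 j^{\alpha-1}$ this already bounds $a_{j-1-i}/Z_j$ by a constant, which is exactly the second ("and thus") inequality $P\le(1+c)q_i$.

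For the sharp inequality I would use $\sum_{i'\ge1}q_{i'}=1$ to write
\[
a_{j-1-i}-Z_j=\sum_{i'=1}^{j-1}q_{i'}\bigl(a_{j-1-i}-a_{j-1-i'}\bigr)+a_{j-1-i}\sum_{i'\ge j}q_{i'},
\]
and estimate after dividing by $Z_j\asymp j^{\alpha-1}$. The second term is harmless: with $a_{j-1-i}\le c_2 j^{\alpha-1}$ and the tail bound $\sum_{i'\ge j}q_{i'}=O(j^{-\alpha})$ from \eqref{Stir} it contributes $O(j^{-\alpha})=O(i/j)$. For the first term I split the $i'$-sum at $j/2$. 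When $i\le j/2$: for $i'\le j/2$ both arguments exceed $j/4$, so the increment bound gives $|a_{j-1-i}-a_{j-1-i'}|\le c|i-i'|j^{\alpha-2}$, and summing against $q_{i'}$ via $\sum i'q_{i'}=\gamma<\infty$ produces $c(i+\gamma)j^{\alpha-2}\le c'i\,j^{\alpha-2}$; for $i'>j/2$ one bounds $|a_{j-1-i}-a_{j-1-i'}|\le a_{j-2}\le c_2 j^{\alpha-1}$ against the tail $\sum_{i'>j/2}q_{i'}=O(j^{-\alpha})$. Both become $O(i/j)$ after dividing by $Z_j$. In the remaining case $i>j/2$ the target bound is free, since then $i/j$ is bounded below while $|a_{j-1-i}/Z_j-1|$ is bounded above. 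The main obstacle is precisely the moderate-$i'$ part of the first term: one must extract exactly one power of $i$, which is where the finiteness of the mean $\gamma=\sum i'q_{i'}$ is essential (this is what makes the bound $c\,iq_i/j$ rather than merely $c\,q_i/j$), and uniformity up to $i=j-1$ — where the local linearization of $a$ degenerates — is what necessitates the separate, but essentially trivial, treatment of $i>j/2$.
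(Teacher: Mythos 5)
Your proposal is correct, but it takes a genuinely different route from the paper, whose entire proof is a citation: the paper quotes, from the proof of Lemma 3 of \cite{kersting}, the sandwich $\big(1-\frac ij\big)q_i \le b_j\, P(Y_0=i\mid X_0=j)\le q_i$ with $1-\frac\gamma j\le b_j\le 1$, divides out $b_j$ for $j>2\gamma$, and absorbs the finitely many remaining $j$ into the constant. You instead reconstruct a self-contained argument from the exact first-jump law $P(Y_0=i\mid X_0=j)=\binom{j}{i+1}\lambda_{j,i+1}/\lambda_j$: I checked the Gamma-factor bookkeeping and your identity $P(Y_0=i\mid X_0=j)=q_i\,a_{j-1-i}/Z_j$ with $a_m=\Gamma(m+\alpha)/\Gamma(m+1)$ is right (and your normalization $\lambda_j=\frac{j}{\Gamma(\alpha+1)}Z_j$ is consistent with \eqref{rate}, a useful sanity check). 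The remaining steps are all sound: $Z_j\asymp j^{\alpha-1}$ follows as you say from monotonicity of $a_m$ and the single term $q_1 a_{j-2}$; the decomposition of $a_{j-1-i}-Z_j$ via $\sum_{i'}q_{i'}=1$ is exact; the increment bound $0\le a_{m'}-a_m\le c(m'-m)(m+1)^{\alpha-2}$ follows from $a_{l+1}-a_l=(\alpha-1)a_l/(l+1)$; and the three contributions ($O(i\,j^{\alpha-2})$ from moderate $i'$ using $\gamma<\infty$, $O(j^{-1})$ from $i'>j/2$, $O(j^{-1})$ from the tail $\sum_{i'\ge j}q_{i'}=O(j^{-\alpha})$ via \eqref{Stir}) are all $O(i/j)$ after dividing by $Z_j$, precisely because $i\ge 1$; the regime $i>j/2$ is indeed free from the bounded-ratio estimate, which also yields the second assertion. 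In substance your computation re-derives what the citation encapsulates (the displayed formulas in \cite{kersting} rest on the same Gamma-ratio identity), so your route buys self-containedness and an explicit, reusable exact formula for the jump law, while the paper's buys brevity at the cost of sending the reader to the reference. One cosmetic point: your parenthetical that finiteness of $\gamma$ ``is what makes the bound $c\,iq_i/j$ rather than merely $c\,q_i/j$'' is worded backwards --- the factor $i$ comes from the increment of $a$ being linear in $|i-i'|$, while the finite mean merely keeps the averaged contribution $\sum_{i'}q_{i'}|i-i'|\le i+\gamma$ of order $i$ and no worse; this does not affect correctness.
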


\begin{proof}
In the proof of Lemma 3 in \cite{kersting} (see there the two displayed formulas before (9)) it is shown that there are real numbers $b_j$ such that for $1\le i< j$
\[ \Big(1- \frac ij\Big) q_i \le b_j  P(Y_0=i \mid X_0=j)  \le q_i, \quad 1 - \frac \gamma j \le b_j \le 1 , \]
and so for $j> \gamma$
\[ \Big(1- \frac ij\Big) q_i \le    P(Y_0=i \mid X_0=j) \le \frac 1{1- \frac \gamma j} q_i. \]
This gives our claim in the case when $j> 2\gamma$. The other finitely many cases are covered too, if we choose $c$ sufficiently large.
\end{proof}

The next two lemmas contain our first applications of these estimates.

\begin{Lemma}\label{Xratio} For $n \to \infty$
\[ \max_{k < \tau_n} \frac{X_k}{X_{k+1}} = O_P(1) .\]
\end{Lemma}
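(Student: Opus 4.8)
The plan is to bound the tail probability $P\bigl(\max_{k<\tau_n} X_k/X_{k+1} > A\bigr)$ uniformly in $n$ and show that it tends to $0$ as $A \to \infty$; this is exactly the assertion $\max_{k<\tau_n} X_k/X_{k+1} = O_P(1)$. The ratio $X_k/X_{k+1}$ is large precisely when a single merger removes almost all of the currently present blocks, and Lemma~\ref{UgivenX} together with the Stirling asymptotics \eqref{Stir} gives a clean power-law tail bound on the number of blocks lost in one merger. That tail bound is the quantity I would exploit.

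First I would observe that from a state with $j$ blocks a merger can lose at most $j-1$ blocks, so the resulting ratio $j/(j-Y)$ never exceeds $j$; hence the event $\{X_k/X_{k+1} > A\}$ with $X_k = j$ is empty unless $j > A$, and when $j > A$ it forces $Y_k > j(1 - 1/A)$. The key structural fact is that, since the block-counting process is strictly decreasing ($n = X_0 > X_1 > \cdots > X_{\tau_n} = 1$), each value $j$ is attained by at most one index $k$. I would therefore decompose over the value $j = X_k$: writing $B_j$ for the event that the process visits $j$ and then loses more than $j(1 - 1/A)$ blocks, the event $\{\max_{k<\tau_n} X_k/X_{k+1} > A\}$ equals $\bigcup_{A < j \le n} B_j$.

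Next I would estimate $P(B_j)$. Since $\{X_k = j\}$ is measurable with respect to the history up to step $k$, the Markov property of the embedded chain gives $P(X_k = j,\ Y_k > m) = P(X_k = j)\, P(Y_0 > m \mid X_0 = j)$. Summing over $k$ and using that the events $\{X_k = j\}$ are disjoint (so $\sum_k P(X_k = j) = P(j \text{ is visited}) \le 1$) yields $P(B_j) \le P\bigl(Y_0 > j(1 - 1/A) \mid X_0 = j\bigr)$. Lemma~\ref{UgivenX} bounds this by $(1+c)\sum_{i > j(1 - 1/A)} q_i$, and \eqref{Stir} gives $\sum_{i > m} q_i \le c\, m^{-\alpha}$. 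For $A \ge 2$ we have $j(1 - 1/A) \ge j/2$, so $P(B_j) \le c\, j^{-\alpha}$.

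Finally I would sum over $j$:
\[
P\Bigl(\max_{k < \tau_n} \tfrac{X_k}{X_{k+1}} > A\Bigr) \le c \sum_{j > A} j^{-\alpha} \le \frac{c}{\alpha - 1}\, A^{-(\alpha - 1)},
\]
where the series converges because $\alpha > 1$. This bound is independent of $n$ and vanishes as $A \to \infty$, which gives the claim. I expect no serious obstacle here: the only points requiring care are the reduction to a sum over the values $j$ (justified by strict monotonicity) and the correct use of the Markov property to turn Lemma~\ref{UgivenX} into a bound on $P(B_j)$. The crucial quantitative input is $\alpha > 1$, which makes the tail sum $\sum_{j > A} j^{-\alpha}$ both finite and decaying in $A$.
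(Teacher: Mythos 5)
Your proof is correct, and it rests on the same ingredients as the paper's: a first-moment (union) bound over merger events, the one-step tail estimate from Lemma \ref{UgivenX} combined with \eqref{Stir}, and the fact that strict monotonicity of $(X_k)$ makes the visited values distinct, so the sum over steps $k$ can be traded for a sum over values $j$ with $\sum_k P(X_k=j)\le 1$. Where you genuinely diverge is in the mechanism that produces decay as $A\to\infty$. The paper rewrites the event as $\{\max_{k<\tau_n} Y_k/X_k>\eta\}$ with $\eta=(a-1)/a$ and exploits the deterministic constraint $Y_k\le X_k-1$: the conditional tail becomes a difference of powers, $P(Y_0>\eta j\mid X_0=j)\le c\bigl((\eta j)^{-\alpha}-j^{-\alpha}\bigr)=c(\eta^{-\alpha}-1)\,j^{-\alpha}$, and the decay comes from the prefactor $\eta^{-\alpha}-1\to 0$ as $a\to\infty$, while $\sum_{j\ge1}j^{-\alpha}$ stays a fixed convergent constant. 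You instead keep the crude one-sided tail bound $c\,j^{-\alpha}$ but use the other trivial constraint, $X_{k+1}\ge 1$, which forces $X_k=j>A$ on the event, so your decay comes from the convergent tail $\sum_{j>A}j^{-\alpha}\le \frac{c}{\alpha-1}A^{-(\alpha-1)}$. Each of the two constraints alone suffices, and your version has the modest advantage of an explicit polynomial rate $A^{-(\alpha-1)}$, uniform in $n$; your handling of the Markov property is also sound, since the block-counting chain is time-homogeneous with transition law depending only on the current number of blocks, so $P(Y_k>m\mid X_k=j)=P(Y_0>m\mid X_0=j)$ and the disjointness of the events $\{X_k=j\}$ over $k$ justifies $P(B_j)\le P\bigl(Y_0>j(1-1/A)\mid X_0=j\bigr)$.
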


\begin{proof} Let $a>1$ and $\eta=(a-1)/a$. Because $Y_k = X_k - X_{k+1},$
\[ P\bigg( \max_{k < \tau_n} \frac{X_k}{X_{k+1}} >a \bigg) = P\bigg( \max_{k<\tau_n} \frac{Y_k}{X_k}>\eta\bigg) \le E\bigg(\sum_{k<\tau_n} 1_{\{Y_k> \eta X_k\}}\bigg).\]
From Lemma \ref{UgivenX} and \eqref{Stir}
\[ P(Y_0 >\eta j \mid X_0=j) \le c \sum_{\eta j< i < j} i^{-\alpha-1} \le c((\eta j)^{-\alpha}-j^{-\alpha}) \]
and consequently, using that $\tau_n \leq n-1$,
\[P\bigg( \max_{k < \tau_n} \frac{X_k}{X_{k+1}} > a \bigg)\le \sum_{k=0}^{n-2} P(Y_k> \eta X_k) \le c(\eta^{-\alpha}-1) \sum_{k=0}^{n-2} E(X_k^{-\alpha}) \le c (\eta^{-\alpha}-1) \sum_{j=1}^\infty \frac 1{j^\alpha}. \]
Since $\alpha>1$, the series is convergent. Also as $a \to \infty$, we have $\eta \to 1$, and the claim follows.
\end{proof}

Next let us introduce the stopping times
\[ \tau_n(a) := \min\{k \ge 0: X_k \le an\}, \quad 0< a \le 1.\]
In particular, $\tau_n(1)=0$.

\begin{Lemma}\label{Xstop} For $n \to \infty$
\begin{align*} \tau_n(a)= \frac {(1-a)n}\gamma + o_P(n).
\end{align*}
\end{Lemma}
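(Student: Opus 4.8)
The plan is to relate $\tau_n(a)$ to the partial sums of the increments $Y_k = X_k - X_{k+1}$ through the telescoping identity
$$n - X_{\tau_n(a)} = \sum_{k=0}^{\tau_n(a)-1} Y_k,$$
and then to evaluate both sides asymptotically. For the left-hand side I would show that the overshoot at level $an$ is negligible, i.e.\ $X_{\tau_n(a)} = an + o_P(n)$; for the right-hand side I would show, via a martingale decomposition, that the sum equals $\gamma\,\tau_n(a) + o_P(n)$. Dividing by $\gamma = 1/(\alpha-1)$ then yields the claim.

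\emph{Overshoot.} Since $X_{\tau_n(a)-1} > an \ge X_{\tau_n(a)}$, we have $0 \le an - X_{\tau_n(a)} \le Y_{\tau_n(a)-1} \le \max_{k<\tau_n} Y_k$. I would bound the last quantity using Lemma \ref{UgivenX} together with \eqref{Stir}: conditionally on $X_k = j$ one has $P(Y_k > \varepsilon n \mid X_k = j) \le (1+c)\sum_{i > \varepsilon n} q_i \le c(\varepsilon n)^{-\alpha}$, uniformly in $j$ and $k$. A union bound over the at most $n$ steps then gives $P(\max_{k<\tau_n} Y_k > \varepsilon n) \le c\,\varepsilon^{-\alpha} n^{1-\alpha} \to 0$ because $\alpha > 1$. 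Hence the overshoot is $o_P(n)$ and $X_{\tau_n(a)} = an + o_P(n)$, so the left side of the identity equals $(1-a)n + o_P(n)$. (Note that Lemma \ref{Xratio} controls $X_k/X_{k+1}$ only up to an $O_P(1)$ factor, which would not pin $X_{\tau_n(a)}$ down to $an$; the tail estimate above is what is needed.)

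\emph{Martingale decomposition.} Write $\mathcal F_k = \sigma(X_0, \dots, X_k)$ and split $\sum_{k<\tau_n(a)} Y_k = M_{\tau_n(a)} + \sum_{k<\tau_n(a)} E(Y_k\mid \mathcal F_k)$, where $M_m = \sum_{k<m}(Y_k - E(Y_k\mid\mathcal F_k))$ is an $(\mathcal F_k)$-martingale. Using Lemma \ref{UgivenX} and \eqref{Stir} I would establish
$$E(Y_k\mid \mathcal F_k) = \gamma + O(X_k^{1-\alpha}), \qquad E(Y_k^2\mid\mathcal F_k) = O(X_k^{2-\alpha}),$$
the first from $\sum_{i<j} iq_i = \gamma - O(j^{1-\alpha})$ and $\sum_{i<j} i\cdot (iq_i/j) = O(j^{1-\alpha})$, the second from $\sum_{i<j} i^2 q_i = O(j^{2-\alpha})$ (here $\alpha < 2$ is used). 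Since $X_k > an$ for $k < \tau_n(a)$ and $\tau_n(a)\le n-1$, the compensator term equals $\gamma\,\tau_n(a) + O\big(n\,(an)^{1-\alpha}\big) = \gamma\,\tau_n(a) + O(n^{2-\alpha})$; and since $X_k \le n$ always while $\tau_n(a)$ is a bounded stopping time, optional stopping gives $E(M_{\tau_n(a)}^2) = E(\langle M\rangle_{\tau_n(a)}) \le c\sum_{k<\tau_n(a)} X_k^{2-\alpha} \le c\,n^{3-\alpha}$, whence $M_{\tau_n(a)} = O_P(n^{(3-\alpha)/2})$. As $\alpha > 1$, both $2-\alpha < 1$ and $(3-\alpha)/2 < 1$, so both error terms are $o_P(n)$ and the right side equals $\gamma\,\tau_n(a) + o_P(n)$. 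Equating the two expressions for the partial sum gives $\gamma\,\tau_n(a) = (1-a)n + o_P(n)$, as desired.

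\emph{Main obstacle.} The chief difficulty is that $\tau_n(a)$ is a random stopping time while the increments $Y_k$ are neither independent nor identically distributed: their conditional law depends on the current block count $X_k$. I would handle this precisely through the martingale-plus-compensator splitting above, controlling both pieces uniformly by the bounds of Lemma \ref{UgivenX}. The recurring structural point is that every resulting error carries a power of $n$ strictly below $1$, which holds exactly because $1 < \alpha < 2$.
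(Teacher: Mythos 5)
Your proof is correct, but it takes a genuinely different route from the paper's. The overshoot step is essentially the same in both (the paper shows $\max_{k<\tau_n}Y_k = O_P(n^\xi)$ for $\xi\in(1/\alpha,1)$ via Lemma \ref{UgivenX} and \eqref{Stir}; your $\varepsilon n$-threshold version with a union bound works equally well and you are right that Lemma \ref{Xratio} alone would not suffice). The divergence is in the main step: the paper does not analyze $\sum_{k<\tau_n(a)}Y_k$ at all; instead it uses the strong Markov property at the stopping time $\tau_n(a)$ to write $\tau_n-\tau_n(a)=\tau_{X_{\tau_n(a)}}$ and then \emph{imports} the known law of large numbers $\tau_n = n/\gamma + o_P(n)$ from \cite{ddsj, gy, im}, so that $\tau_n-\tau_n(a) = X_{\tau_n(a)}/\gamma + o_P(n) = an/\gamma + o_P(n)$. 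You instead give a self-contained martingale-plus-compensator argument for the telescoped sum, and your ingredient estimates are sound: $E(Y_k\mid X_k=j)=\gamma+O(j^{1-\alpha})$ and $E(Y_k^2\mid X_k=j)=O(j^{2-\alpha})$ follow from Lemma \ref{UgivenX} and \eqref{Stir} exactly as the paper derives the analogous bounds in Proposition \ref{quadrature} and Lemma \ref{bigUsmallU}, and your error exponents $2-\alpha<1$ and $(3-\alpha)/2<1$ are right for $1<\alpha<2$ (optional stopping is legitimate since $\tau_n(a)\le n$ is bounded). What each approach buys: the paper's proof is shorter and deliberately leans on the cited LLN for $\tau_n$ (the paper explicitly notes, in the example on the number of collisions, that it does not reprove that result); your argument avoids the external citation entirely, and in fact, since $X_k$ is strictly decreasing one can bound the compensator error by $\sum_{j\le n} j^{1-\alpha}=O(n^{2-\alpha})$ without using $X_k>an$, so your method run over the whole chain yields an independent proof of $\tau_n=(\alpha-1)n+o_P(n)$ itself --- strictly more than the paper's proof delivers.
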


\begin{proof}
Let $\xi\in(\alpha^{-1},1)$. Then for $k < \tau_n$ from Lemma \ref{UgivenX} and \eqref{Stir}
\[ P(Y_k> n^\xi \mid X_k) \le c \sum_{i> n^\xi} i^{-\alpha-1} \le c (n^\xi)^{-\alpha} \]
and since $\tau_n \le n - 1$
\[ P\big(\max_{k< \tau_n} Y_k >n^\xi) \le \sum_{k<n-1} P(Y_k> n^\xi ) \le cn^{1-\xi\alpha} =o(1)\]
or $\max_{k<\tau_n} Y_k =O_P(n^\xi)$. Thus $X_{\tau_n(a)-1}-X_{\tau_n(a)}=Y_{\tau_n(a)-1}=O_P(n^\xi)$. Also by definition $X_{\tau_n(a)}\le an < X_{\tau_n(a)-1}$. Since $\xi < 1$ this gives
\[ n^{-1}X_{\tau_n(a)} \to a \]
in probability.




Observe that $\tau_n-\tau_n(a)= \tau_{X_{\tau_n(a)}}$, and recall that it was shown in  \cite{ddsj, gy, im} that $$\tau_n=n/\gamma +o_P(n).$$ Therefore, using the strong Markov property, we obtain
\[ \tau_n-\tau_n(a) = \frac{X_{\tau_n(a)}}\gamma+ o_P(X_{\tau_n(a)} )= \frac {an}\gamma + o_P(n), \]
which gives the claim.
\end{proof}

The next lemma pertains to the evolution of the number of blocks in continuous time and follows fairly directly from results in \cite{bbs}, where the number of blocks was studied for the beta coalescent started with infinitely many blocks.  Recall the definition of $m( r )$ from \eqref{mtdef} and let as above $N_n( r )$ denote the number of blocks at time $r$.

\begin{Lemma}\label{Ntlem}
Consider the beta coalescent started with $n$ blocks at time zero. Let $\varepsilon > 0$.  Suppose $h: \N \rightarrow (0, \infty)$ is a function such that $\lim_{n \rightarrow \infty} n^{1-\alpha} h(n) = 0$.  Then $$\lim_{n \rightarrow \infty} P \big( (1 - \varepsilon) m( r ) n \leq N_n(n^{1-\alpha} r) \leq (1 + \varepsilon) m( r ) n \mbox{ for all } r \in [0, h(n)] \big) = 1.$$
\end{Lemma}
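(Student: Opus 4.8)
The plan is to transfer the known asymptotics for the number of blocks in the $\Lambda$-coalescent \emph{started from infinitely many blocks} (the setting of \cite{bbs}) to the $n$-block coalescent, and to do this uniformly over the time window $[0,h(n)]$ on the scaled time axis. The key result from \cite{bbs} is that if $N_\infty(r)$ denotes the number of blocks at reverse time $r$ for the beta coalescent started from $\infty$, then $r^{1/(\alpha-1)} N_\infty(r) \to (\alpha\Gamma(\alpha))^{1/(\alpha-1)}$ almost surely (or in probability) as $r \downarrow 0$, which is precisely the statement that $N_\infty$ decays like $m(r)$ in the appropriate sense. I would first record the exact formulation of the relevant limit theorem from \cite{bbs}, phrased in terms of the scaling $N_\infty(n^{1-\alpha}r)/n \to m(r)$, and then argue that on the time window of interest the $n$-block and $\infty$-block coalescents can be coupled so that their block counts stay close.

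The first substantive step is the coupling. Using the Poisson construction of Section~\ref{constsec}, I would realize the $n$-block coalescent and the $\infty$-block coalescent on the same probability space so that every merger in the $n$-block process corresponds to a merger in the $\infty$-block process. Because consistency in $n$ fails in the $v_i$-ordered construction, I would instead invoke the standard monotone coupling for $\Lambda$-coalescents in which $N_n(r) \le N_\infty(r)$ for all $r$ and the difference is controlled. The point is that as long as the number of blocks is a large fraction of $n$, the $n$-block coalescent behaves essentially like the $\infty$-block one, since the correction terms in the merger rates $\lambda_{b,k}$ differ only by factors involving $(1-p)^{n-b}$ which are negligible when $p$ is small and $b$ is close to $n$. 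Quantitatively, one wants to show that for $r$ ranging over $[0,h(n)]$, the relative discrepancy $|N_n(n^{1-\alpha}r) - N_\infty(n^{1-\alpha}r)|/n$ is $o_P(1)$, uniformly in $r$.

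The second step is to upgrade the pointwise (or $r\downarrow 0$) convergence to uniform control over $[0,h(n)]$. Here the hypothesis $\lim_{n\to\infty} n^{1-\alpha} h(n) = 0$ is exactly what is needed: it forces the \emph{scaled} reverse time $n^{1-\alpha}r$ to lie in a shrinking neighborhood of $0$, where the asymptotic $N_\infty(q) \sim (\alpha\Gamma(\alpha)/q)^{1/(\alpha-1)}$ is sharp. Monotonicity of $r \mapsto N_n(r)$ and of $r\mapsto m(r)$ lets me convert the supremum over the continuum $[0,h(n)]$ into control at finitely many grid points together with a monotonicity sandwich between consecutive grid points, so that the event inside the probability can be established by a union bound over $O(\varepsilon^{-1})$ many points. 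At each fixed scaled time $q = n^{1-\alpha}r$ one combines the $\infty$-block limit law from \cite{bbs} with the coupling bound from the first step to conclude $(1-\varepsilon)m(r)n \le N_n(n^{1-\alpha}r) \le (1+\varepsilon)m(r)n$ with probability tending to one.

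I expect the main obstacle to be the uniformity near $r=0$, i.e. near the initial time where $m(r)$ is close to $1$ and the coalescent has just started. For very small $r$ the fluctuations of $N_n$ around $m(r)n$ are of relative order $n^{-1+1/\alpha}$ (this is exactly the scale picked out by $\Psi_n$), so one must verify that these fluctuations do not violate the multiplicative $(1\pm\varepsilon)$ bounds; since $m(0)=1$ and the fluctuations are of smaller order than $\varepsilon$, this is fine, but it requires care in how the union bound is set up so that the error accumulated over the grid does not blow up. A clean way to handle this is to control $\sup_{r\le h(n)} |N_n(n^{1-\alpha}r)/n - m(r)|$ directly via a martingale or maximal-inequality argument on the number-of-blocks process, using the jump-rate estimates from Lemma~\ref{UgivenX} and \eqref{Stir}; the delicate point throughout is that the window length $h(n)$ is allowed to grow with $n$, so all estimates must be genuinely uniform rather than merely pointwise in $r$.
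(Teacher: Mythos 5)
Your overall strategy---transferring the small-time asymptotics of \cite{bbs} for the coalescent started from infinitely many blocks to the $n$-coalescent via a comparison---is indeed the paper's strategy, and you correctly identify the key input. But the coupling step as you formulate it has a genuine gap: you propose to show that the \emph{same-time} discrepancy $|N_n(n^{1-\alpha}r) - N_\infty(n^{1-\alpha}r)|/n$ is $o_P(1)$ uniformly over $r \in [0,h(n)]$. This is false near $r=0$. By the very result of \cite{bbs} you quote, $N_\infty(n^{1-\alpha}r) \approx (\alpha\Gamma(\alpha)/r)^{1/(\alpha-1)}\, n$, which exceeds $n$ by a factor tending to infinity as $r \downarrow 0$ (and $N_\infty(0+)=\infty$), while $N_n(n^{1-\alpha}r) \le n$ and $m(r) \le 1$. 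So the two processes run from the same time origin are \emph{not} close on the initial part of the window---precisely the region you flag as delicate---and the monotone coupling $N_n(r) \le N_\infty(r)$ gives only an upper bound that is vacuous there. No grid, union bound, or martingale maximal inequality can rescue a same-time comparison, because the statement being targeted is simply not true.

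The missing idea is a \emph{time shift}. Observe that $m(r)$ in \eqref{mtdef} is the profile $(\alpha\Gamma(\alpha)/q)^{1/(\alpha-1)}$ evaluated at $q = r + \alpha\Gamma(\alpha)$: the summand $\alpha\Gamma(\alpha)$ is exactly the scaled time the infinite coalescent needs to come down to about $n$ blocks. The paper sandwiches $N_n$ between two time-shifted copies of $N_\infty$: by \eqref{bbseq}, with probability tending to one, $N_\infty\big((1+\delta)\alpha\Gamma(\alpha)n^{1-\alpha}\big) \le n \le N_\infty\big((1-\delta)\alpha\Gamma(\alpha)n^{1-\alpha}\big)$, and then the Markov property together with monotonicity of the block count in the initial state (the restriction coupling you invoke) yields $N_\infty\big(((1+\delta)\alpha\Gamma(\alpha)+r)n^{1-\alpha}\big) \le N_n(n^{1-\alpha}r) \le N_\infty\big(((1-\delta)\alpha\Gamma(\alpha)+r)n^{1-\alpha}\big)$ on that event. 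Moreover, the \emph{almost sure} (not merely in-probability---your hedge matters here) convergence in \cite{bbs} gives the uniformity over $[0,h(n)]$ for free: a.s. convergence as $q \downarrow 0$ implies uniform relative control on the shrinking window $[0, g(n)n^{1-\alpha}]$ with $g(n) = (1+\delta)(\alpha\Gamma(\alpha)+h(n))$, which is exactly where the hypothesis $n^{1-\alpha}h(n) \to 0$ enters; no discretization or maximal inequality is needed. Choosing $\delta$ small enough that the $(1\pm\delta)$-shifted profiles lie within $(1\pm\varepsilon)m(r)$ (the paper's conditions \eqref{delta1} and \eqref{delta2}) then completes the proof.
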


\begin{proof}
Consider a beta coalescent started with infinitely many blocks at time zero, and let $N( r )=N_\infty( r )$ denote the number of blocks at time $r$.  Theorem 1.1 of \cite{bbs} states that
\begin{equation}\label{bbseq}
\lim_{r \downarrow 0} r^{1/(\alpha - 1)} N(r) = (\alpha \Gamma(\alpha))^{1/(\alpha - 1)} \text{  a.s.}
\end{equation}
The strategy of the proof will be to bound the process $(N_n(r), r \geq 0)$ from above by the process $(N(r), r \geq 0)$ started at time $(1 - \delta)\alpha \Gamma(\alpha) n^{1 - \alpha}$, when there will typically be more than $n$ blocks, and from below by the process $(N(r), r \geq 0)$ started at time $(1 + \delta) \alpha \Gamma(\alpha) n^{1 - \alpha}$, when there will typically be fewer than $n$ blocks.

Choose $\delta > 0$ sufficiently small that
\begin{equation}\label{delta1}
(1 + \delta) \bigg( \frac{\alpha \Gamma(\alpha)}{(1 - \delta)\alpha \Gamma(\alpha) + r} \bigg)^{1/(\alpha - 1)} \leq (1 + \varepsilon) m(r)
\end{equation}
and
\begin{equation}\label{delta2}
(1 - \delta) \bigg( \frac{\alpha \Gamma(\alpha)}{(1 + \delta)\alpha \Gamma(\alpha) + r} \bigg)^{1/(\alpha - 1)} \geq (1 - \varepsilon) m(r).
\end{equation}
For $r > 0$, define the event $$G(r) = \big\{(1 - \delta) (\alpha \Gamma(\alpha))^{1/(\alpha - 1)} r^{-1/(\alpha - 1)}n \leq N(n^{1-\alpha} r) \leq (1 + \delta) (\alpha \Gamma(\alpha))^{1/(\alpha - 1)} r^{-1/(\alpha - 1)}n\big\}.$$  
For $n \in \N$, let $g(n) = (1 + \delta)(\alpha \Gamma(\alpha) + h(n))$.  Then $\lim_{n \rightarrow \infty} g(n)n^{1-\alpha} = 0$, so (\ref{bbseq}) gives
\begin{equation}\label{PG1}
\lim_{n \rightarrow \infty} P\big(G(r) \mbox{ occurs for all }r \in [0, g(n)] \big) = 1
\end{equation}
and
\begin{equation}\label{timeton}
\lim_{n \rightarrow \infty} P\big( N((1 + \delta) \alpha \Gamma(\alpha) n^{1-\alpha}) \leq n \leq N((1 - \delta) \alpha \Gamma(\alpha) n^{1-\alpha}) \big) = 1.
\end{equation}
Therefore,
\begin{align*}
&P \big( N_n(n^{1-\alpha} r) > (1 + \varepsilon) m(r) n \mbox{ for some }r \in [0, h(n)]\big) \nonumber \\
&\hspace{.5in} \leq P \big( N((1 - \delta) \alpha \Gamma(\alpha) n^{1-\alpha}) < n \big) \nonumber \\
&\hspace{.8in}+ P \big( N(((1 - \delta)\alpha \Gamma(\alpha)+ r) n^{1 - \alpha}) > (1 + \varepsilon) m(r) n \mbox{ for some }r \in [0, h(n)] \big),
\end{align*}
which tends to zero as $n \rightarrow \infty$ by (\ref{delta1}), (\ref{PG1}), and (\ref{timeton}).  Likewise,
\begin{align*}
&P \big( N_n(n^{1-\alpha} r) < (1 - \varepsilon) m(r) n \mbox{ for some }r \in [0, h(n)]\big) \nonumber \\
&\hspace{.5in} \leq P \big( N((1 + \delta) \alpha \Gamma(\alpha) n^{1-\alpha}) > n \big) \nonumber \\
&\hspace{.8in}+ P \big( N(((1 + \delta)\alpha \Gamma(\alpha) + r) n^{1 - \alpha}) < (1 - \varepsilon) m(r) n \mbox{ for some }r \in [0, h(n)] \big),
\end{align*}
which tends to zero as $n \rightarrow \infty$ by (\ref{delta2}), (\ref{PG1}), and (\ref{timeton}).  The result follows.
\end{proof}

\subsection{Functionals of the block counting process}

We again use the notation of section \ref{resultsec}, so that $n=X_0 > X_1 > \cdots > X_{\tau_n}=1$ is the block-counting process of the beta $n$-coalescent and $Y_k = X_k - X_{k+1}$.  

\begin{Prop}\label{quadrature}
Let $f:(0,1] \to \mathbb R$ be a positive, decreasing, differentiable function such that
\[ |f'(x)| \le c x^{-\zeta-1} \]
for some $c>0$. If $\zeta < 1$, then $f$ is integrable and 
\[ \gamma \sum_{k<\tau_n} f\Big(\frac {X_k}n\Big) = n \int_0^1 f(x)\, dx + o_P(n) . \]
If moreover $\zeta< 1/\alpha$, i.e. $f \in\mathscr F$, then
\[\gamma \sum_{k<\tau_n} f\Big(\frac {X_k}n\Big) = n \int_0^1 f(x)\, dx -  \sum_{k<\tau_n} f\Big(\frac {X_k}n\Big) (Y_k-\gamma) + o_P(n^{1/\alpha}).\]
\end{Prop}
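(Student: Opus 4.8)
The plan is to base both parts on a single Riemann-sum identity and to treat the centering of the increments $Y_k$ separately. Write $x_k = X_k/n$, so that $1 = x_0 > x_1 > \dots > x_{\tau_n} = 1/n$ and $Y_k/n = x_k - x_{k+1}$. Then
$$\sum_{k<\tau_n} f\Big(\tfrac{X_k}n\Big) Y_k = n\sum_{k<\tau_n} f(x_k)(x_k - x_{k+1})$$
is $n$ times a lower Riemann sum for $\int_{1/n}^1 f$. The first observation is that the second (refined) assertion is algebraically equivalent to the single estimate
$$\sum_{k<\tau_n} f\Big(\tfrac{X_k}n\Big) Y_k = n\int_0^1 f(x)\,dx + o_P(n^{1/\alpha}),$$
since moving $\gamma\sum_k f(X_k/n)$ to the left and combining with $\sum_k f(X_k/n)(Y_k - \gamma)$ recombines into $\sum_k f(X_k/n) Y_k$. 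Likewise the first assertion follows from the same estimate with error $o_P(n)$ together with the centering bound $\sum_{k<\tau_n} f(X_k/n)(Y_k - \gamma) = o_P(n)$. Thus there are exactly two things to prove: the Riemann estimate at the two accuracies, and the centering bound.

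For the Riemann estimate, monotonicity of $f$ gives $f(x_k) \le f(x) \le f(x_{k+1})$ on $[x_{k+1},x_k]$, so the lower sum underestimates the integral and
$$0 \le \int_{1/n}^1 f - \sum_{k<\tau_n} f(x_k)(x_k - x_{k+1}) \le \sum_{k<\tau_n}\big(f(x_{k+1}) - f(x_k)\big)(x_k - x_{k+1}).$$
I would bound each gap $x_k - x_{k+1} = Y_k/n$ by $\max_{k<\tau_n}Y_k/n$ and then telescope, using $\sum_{k<\tau_n}(f(x_{k+1}) - f(x_k)) = f(1/n) - f(1) \le f(1/n) \le c\, n^\zeta$ by \eqref{fbound}. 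By the tail estimate established in the proof of Lemma \ref{Xstop}, $\max_{k<\tau_n} Y_k = O_P(n^\xi)$ for every $\xi \in (1/\alpha, 1)$, so the error is $O_P(n^{\xi + \zeta - 1})$; the boundary term $n\int_0^{1/n} f = O(n^\zeta)$ is $o(n^{1/\alpha})$ when $\zeta<1/\alpha$ and $o(n)$ when $\zeta<1$. Choosing $\xi$ just above $1/\alpha$, the exponent $\xi + \zeta - 1$ lies below $1/\alpha$ when $\zeta < 1/\alpha$ and below $1$ when $\zeta < 1$ (using $\alpha > 1$). This yields the Riemann estimate at both accuracies, and in particular the second assertion.

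It remains to prove $D_n := \sum_{k<\tau_n} f(X_k/n)(Y_k - \gamma) = o_P(n)$. Set $\mathcal G_k := \sigma(X_0,\dots,X_k)$ and split $D_n = M_n + B_n$, where $M_n = \sum_{k<\tau_n} f(x_k)(Y_k - E[Y_k\mid \mathcal G_k])$ and $B_n = \sum_{k<\tau_n} f(x_k)(E[Y_k\mid\mathcal G_k] - \gamma)$. Since $\{k<\tau_n\} = \{X_k \ge 2\}$ is $\mathcal G_k$-measurable and $E[Y_k\mid\mathcal G_k]$ depends only on $X_k$, $M_n$ is a martingale. For the bias, Lemma \ref{UgivenX} together with \eqref{Stir} and \eqref{pkmean} gives $|E[Y_k\mid X_k = j] - \gamma| \le c\, j^{1-\alpha}$ (both the truncation error $\sum_{i\ge j} iq_i$ and the discrepancy $\sum_{i<j} i\,|P(Y_0=i\mid X_0=j) - q_i|$ are $O(j^{1-\alpha})$); crucially, since $(X_k)$ is strictly decreasing, $\sum_{k<\tau_n} h(X_k) \le \sum_{m=1}^n h(m)$ deterministically for any $h \ge 0$, whence $|B_n| \le c\, n^\zeta \sum_{m=1}^n m^{1-\alpha-\zeta} = o(n)$. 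For the martingale, $\Var(M_n) = E\big[\sum_{k<\tau_n} f(x_k)^2 \Var(Y_k\mid\mathcal G_k)\big]$, and $\Var(Y_k\mid X_k=j) \le \sum_{i<j} i^2 P(Y_0=i\mid X_0=j) \le c\, j^{2-\alpha}$ again by Lemma \ref{UgivenX} and \eqref{Stir}; the same deterministic occupation bound yields $\Var(M_n) \le c\, n^{2\zeta}\sum_{m=1}^n m^{2-\alpha-2\zeta} = o(n^2)$, so $M_n = o_P(n)$. Hence $D_n = o_P(n)$ and the first assertion follows.

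The main obstacle is the Riemann estimate at the singularity $x = 0$: the factor $n^\zeta$ coming from $f(1/n)$ must be beaten by the small mesh $n^{\xi-1}$, which succeeds only because the telescoping collapses the total variation of $f$ to the single boundary value $f(1/n)$ and because $\max_k Y_k$ is controlled at order $n^\xi$ with $\xi < 1$. The centering estimates are more routine once Lemma \ref{UgivenX} is in hand, and are made clean by the deterministic bound $\sum_{k<\tau_n} h(X_k) \le \sum_{m=1}^n h(m)$, a consequence of strict monotonicity of the block-counting chain, which obviates any renewal-theoretic occupation-time argument.
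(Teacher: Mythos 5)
Your overall architecture is sound, and parts of it are genuinely nice: the observation that the second assertion is algebraically equivalent to $\sum_{k<\tau_n} f(X_k/n)\,Y_k = n\int_0^1 f(x)\,dx + o_P(n^{1/\alpha})$ is exactly the reformulation \eqref{sums} in the paper, and your centering bound $\sum_{k<\tau_n} f(X_k/n)(Y_k-\gamma)=o_P(n)$ via the martingale-plus-bias decomposition (with the deterministic occupation bound $\sum_{k<\tau_n}h(X_k)\le\sum_{m=1}^n h(m)$) is correct and is a different route to the first assertion than the paper's, which instead partitions $[0,1]$ and uses the stopping times $\tau_n(a)$ of Lemma \ref{Xstop}. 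But your Riemann estimate, which carries the entire weight of the second assertion, has a genuine gap, and it is a scale-bookkeeping error. Your bound
\[
0 \le \int_{1/n}^1 f(x)\,dx - \sum_{k<\tau_n} f(x_k)(x_k-x_{k+1}) \le \frac{\max_{k<\tau_n} Y_k}{n}\,\bigl(f(1/n)-f(1)\bigr) = O_P(n^{\xi-1})\,O(n^{\zeta})
\]
lives on the integral scale, whereas the assertion to be proved lives on the sum scale, i.e.\ after multiplying by $n$. Your method therefore yields $\sum_{k<\tau_n} f(X_k/n)\,Y_k = n\int_{1/n}^1 f(x)\,dx + O_P(n^{\xi+\zeta})$, not $O_P(n^{\xi+\zeta-1})$, and the relevant comparison is $\xi+\zeta$ against $1/\alpha$. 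Since the tail estimate from the proof of Lemma \ref{Xstop} forces $\xi>1/\alpha$ --- and this is unavoidable, because the largest merger is genuinely of size of order $n^{1/\alpha}$, as the stable limit structure shows --- you always have $\xi+\zeta>1/\alpha$, so the mesh-times-total-variation bound can never deliver $o_P(n^{1/\alpha})$. It even fails for the first assertion when $1-1/\alpha\le\zeta<1$, since then no $\xi\in(1/\alpha,1)$ satisfies $\xi+\zeta<1$.

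The failure is structural, not a matter of tuning $\xi$: bounding every gap by the maximal one discards the fact that a typical $Y_k$ is of order one, with only the conditional second moment $E(Y_k^2\mid X_k)\le c\,X_k^{2-\alpha}$ being large (Lemma \ref{UgivenX} together with \eqref{Stir}). The paper's proof retains this information by a second-order Taylor expansion,
\[
\int_{X_{k+1}/n}^{X_k/n} f(x)\,dx = f\Bigl(\frac{X_k}{n}\Bigr)\frac{Y_k}{n} + \frac12 f'\Bigl(\frac{\bar X_k}{n}\Bigr)\Bigl(\frac{Y_k}{n}\Bigr)^2,
\]
so that the sum-scale remainder is at most $c\,n^{\zeta}\sum_{k<\tau_n} X_{k+1}^{-\zeta-1}Y_k^2$; after using Lemma \ref{Xratio} to replace $X_{k+1}$ by $X_k$ and taking expectations, this is $O_P\bigl(n^{\zeta}\sum_{j\ge1} j^{1-\alpha-\zeta}\bigr)=O_P(n^{\zeta})=o_P(n^{1/\alpha})$, once $\zeta$ is pushed up towards $1/\alpha$ so that $1-\alpha-\zeta<-1$ (admissible, since enlarging $\zeta$ only weakens \eqref{fcond} on $(0,1]$). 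Replacing your telescoped bound by this Taylor/second-moment argument also repairs the Riemann estimate at accuracy $o_P(n)$ for every $\zeta<1$, after which your centering bound does finish the first assertion; as it stands, however, the proposal proves neither assertion in the stated generality.
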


\begin{proof} 
Choose $\varepsilon >0$.
Let $0=a_0< a_1 < \cdots < a_m=1$ be a partition of $[0,1]$. Then by the assumptions on $f$
\[0\le  \sum _{  \tau_n(a_1)\le k<\tau_n } f\bigg(\frac {X_k}n\bigg) \le \sum_{1 \le j \le a_1n} f\bigg(\frac jn\bigg) \le n \int_0^{a_1} f(x)\: dx \le \tfrac \varepsilon 3 n , \]
if only $a_1$ is sufficiently small.

Also for $i >1$ by monotonicity and Lemma \ref{Xstop}
\[   \sum_{\tau_n(a_{i}) \le k < \tau_n(a_{i-1})} f\bigg(\frac {X_k}n\bigg) \ge f(a_i) (\tau_n(a_{i-1})-\tau_n(a_{i})) = f(a_i)\frac{(a_{i}-a_{i-1})n}\gamma + o_P(n) \]
which implies that
\[ \sum_{k<\tau_n(a_1) } f\bigg(\frac {X_k}n\bigg) \ge \frac n\gamma \int_{a_1}^{1} f(x)\: dx - \tfrac \varepsilon3 n + o_P(n) ,\]
if only the partition is chosen fine enough. Combining the estimates we obtain
\[ P \bigg(\sum_{k < \tau_n}f\bigg(\frac {X_k}n\bigg) < \frac n\gamma  \int_0^1 f(x)\: dx - \varepsilon n\bigg) =o(1).\]
In the same manner we may bound $\sum_{k< \tau_n} f(X_k/n)$ from above such that the first claim follows.

%

As to the second one, from a Taylor expansion with $X_{k+1} \le \bar X_k \le X_k$,
\[ \int_{X_{k+1}/n}^{X_{k}/n} f(x)\, dx = f\Big(\frac {X_k}n\Big) \frac {Y_k}n  + \frac 12 f'\Big(\frac {\bar X_k}n\Big) \Big(\frac{Y_k} n\Big)^2 . \]
Therefore $R_n$, given by
\[ \gamma \sum_{k<\tau_n } f\Big(\frac {X_k}n\Big) = n \int_{1/n}^1 f(x)\, dx -  \sum_{k<\tau_n} f\Big(\frac {X_k}n\Big) (Y_k-\gamma) + R_n,\]
fulfils by assumption
\[ |R_n| \le \sum_{k=0}^{\tau_n-1} \Big|f'\Big(\frac {\bar X_{k}}n\Big)\Big| \frac {Y_k^2}n\le c \sum_{k=0}^{\tau_n-1} \Big( \frac{X_{k+1}}n\Big)^{-\zeta-1}\frac{Y_{k}^2}n \]
or
\[ |R_n| \le cn^\zeta \Big(\max_{k< \tau_n}\frac{X_k}{X_{k+1}}\Big)^{\zeta+1} \sum_{k=0}^{\tau_n-1} X_k^{-\zeta-1}Y_k^2.\]
Because of Lemma \ref{UgivenX}, equation \eqref{Stir}, and the fact that $\alpha < 2$, for $k<\tau_n$ we get
\[ E( Y_k^2 \mid X_k) \le c \sum_{i < X_k} i^2 q_i \le c X_k^{2-\alpha} \]
and therefore
\begin{align*} E\Big( \sum_{k < \tau_n} X_{k}^{-\zeta-1}Y_{k}^2\Big)  \le c E\Big(  \sum_{ k < \tau_n } X_{k}^{1-\alpha-\zeta}\Big)\le c \sum_{j \ge 1} j^{1-\alpha-\zeta}.
\end{align*}
Furthermore $1-\alpha-\zeta = \frac 1\alpha - \zeta - \frac 1\alpha(\alpha-1)^2-1 <-1$, if only $\zeta$ is chosen sufficiently close to $1/\alpha$, such that the right-hand series is convergent.
Altogether in view of Lemma \ref{Xratio} we obtain 
\[R_n= O_P(n^\zeta)=o_P(n^{1/\alpha}).\] 
Finally, since $\zeta < 1/\alpha < 1â$ it follows from (\ref{fbound}) that
\[ n\int_0^{1/n} f(x)\, dx \le c n\Big(\frac{1}n \Big)^{1-\zeta}= c n^{\zeta} =o(n^{1/\alpha}). \]
This gives the second assertion. 
\end{proof}


\begin{Lemma}\label{bigUsmallU}
Let $f\in \mathscr F$. Then for any $\eta >0$ there is an $\varepsilon >0$ such that for all $n\ge 1$
\[ P \Big( \Big|  d + n^{-1/\alpha} \sum_{k< \tau_n}f\Big(\frac {X_{k}} n \Big) (Y_k 1_{\{  f(X_k/n) Y_k \le \varepsilon n^{1/\alpha} \}} - \gamma)  \Big| >\eta\Big) \le \eta \]
with
\begin{align}\label{constd} d= d(\varepsilon)= \varepsilon^{1-\alpha}\frac{\alpha}{\Gamma(2 - \alpha)} \int_0^1 f(x)^{\alpha} \: dx. 
\end{align}
\end{Lemma}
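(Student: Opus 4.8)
The plan is to isolate the contribution of the \emph{small} jumps by comparing the truncated, centered sum with its predictable compensator relative to the natural filtration of the block-counting process. Write $g_k = f(X_k/n)$, let $\mathcal G_k = \sigma(X_0,\dots,X_k)$, and set $T_k = \varepsilon n^{1/\alpha}/g_k$, so that the indicator reads $1_{\{Y_k \le T_k\}}$. Since $g_k$ and the event $\{k<\tau_n\}$ are $\mathcal G_k$-measurable, the variables $D_k := g_k\big(Y_k 1_{\{Y_k\le T_k\}} - E(Y_k 1_{\{Y_k\le T_k\}}\mid \mathcal G_k)\big)\,1_{\{k<\tau_n\}}$ are martingale differences, and I would decompose
\[ n^{-1/\alpha}\sum_{k<\tau_n} g_k\big(Y_k 1_{\{Y_k\le T_k\}} - \gamma\big) = A_n + M_n, \]
where $A_n = n^{-1/\alpha}\sum_{k<\tau_n} g_k\big(E(Y_k 1_{\{Y_k\le T_k\}}\mid\mathcal G_k) - \gamma\big)$ is the compensator and $M_n = n^{-1/\alpha}\sum_{k<\tau_n} D_k$. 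The goal is to show $A_n \to -d$ and that $M_n$ is small, the latter uniformly in $n$.

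For the compensator, I would use $\gamma = \sum_{i\ge 1} iq_i$ together with Lemma \ref{UgivenX} to write, for $k<\tau_n$,
\[ E(Y_k 1_{\{Y_k\le T_k\}}\mid\mathcal G_k) - \gamma = -\sum_{i>T_k} iq_i + O\!\Big(\tfrac1{X_k}\sum_{i\le T_k} i^2 q_i\Big) + (\text{boundary terms}), \]
the boundary terms arising for the indices with $T_k \ge X_k$, where the indicator is trivially $1$. By \eqref{Stir} the tail satisfies $\sum_{i>T_k} iq_i = \frac{\alpha}{\Gamma(2-\alpha)(\alpha-1)} T_k^{1-\alpha}(1+o(1))$, and since $T_k^{1-\alpha} = \varepsilon^{1-\alpha} n^{(1-\alpha)/\alpha} g_k^{\alpha-1}$, multiplying by $g_k$ and scaling yields the leading term $-\frac{\alpha}{\Gamma(2-\alpha)(\alpha-1)}\,\varepsilon^{1-\alpha}\, n^{-1}\sum_{k<\tau_n} g_k^{\alpha}$. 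Here I would apply the first part of Proposition \ref{quadrature} to $f^\alpha$ — which again lies in the admissible class because $|(f^\alpha)'|\le c\,x^{-\zeta\alpha-1}$ with $\zeta\alpha<1$ — to obtain $n^{-1}\sum_{k<\tau_n} g_k^\alpha \to (\alpha-1)\int_0^1 f(x)^\alpha\,dx$, turning the leading term into exactly $-d$. The remaining pieces must be shown negligible: the $O(X_k^{-1}\sum_{i\le T_k} i^2q_i)$ correction contributes $O(\varepsilon^{2-\alpha} n^{1/\alpha-1}\sum_k g_k^{\alpha-1}X_k^{-1})$, bounded by $c\,\varepsilon^{2-\alpha} n^{1/\alpha-1+\zeta(\alpha-1)}$ via \eqref{fbound} and a convergent series, the exponent being negative since $\zeta<1/\alpha$; the boundary terms concern only indices with $X_k$ at most a sublinear power of $n$ and are controlled by the same power counting.

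For the martingale part, I would bound the conditional variance by the second moment and use Lemma \ref{UgivenX} and \eqref{Stir}:
\[ g_k^2\,E(Y_k^2 1_{\{Y_k\le T_k\}}\mid X_k) \le c\,g_k^2 \sum_{i\le T_k} i^{1-\alpha} \le c\,\varepsilon^{2-\alpha} n^{(2-\alpha)/\alpha} g_k^{\alpha}, \]
using $1<\alpha<2$ so that $\sum_{i\le T} i^{1-\alpha}\le c\,T^{2-\alpha}$. Taking expectations and summing, the deterministic bound $\sum_{k<\tau_n} g_k^\alpha \le \sum_{m=1}^n f(m/n)^\alpha \le c\,n$ (valid because the $X_k$ are distinct and $\zeta\alpha<1$) gives $\Var\big(n^{-1/\alpha}\sum_{k<\tau_n} D_k\big)\le c\,\varepsilon^{2-\alpha}$, \emph{uniformly in $n$}; Chebyshev's inequality then makes $M_n$ small with large probability once $\varepsilon$ is small, uniformly in $n$.

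The main obstacle is the compensator estimate rather than the martingale bound. One must pass from the conditional law of $Y_k$ to the limit weights $q_i$ with sufficient precision (Lemma \ref{UgivenX}), correctly identify the $\varepsilon^{1-\alpha}$ tail constant, and — crucially — invoke Proposition \ref{quadrature} to replace $n^{-1}\sum g_k^\alpha$ by $(\alpha-1)\int_0^1 f^\alpha$, which is exactly what makes the leading drift equal $-d$. The delicate part is the bookkeeping of the correction and boundary terms, each shown to be $o(1)$ or $O(\varepsilon^{2-\alpha})$ with the relevant exponents of $n$ kept negative through $\zeta<1/\alpha$; combining $A_n\to -d$ with the uniform variance bound on $M_n$ then yields the claim for the appropriate choice of $\varepsilon$.
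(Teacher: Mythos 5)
Your proposal is correct and follows essentially the same route as the paper's proof: the same martingale-difference/compensator decomposition with the uniform variance bound $c\,\varepsilon^{2-\alpha}$ via Lemma \ref{UgivenX} and \eqref{Stir}, and the same identification of the drift $-d$ by applying the first part of Proposition \ref{quadrature} to $f^\alpha$. The only deviations are cosmetic bookkeeping choices in the error terms (you keep the truncation at $T_k$ and localize the boundary indices, where the paper absorbs both into an $O(X_k^{1-\alpha})$ term and takes $\zeta$ close to $1/\alpha$), and both versions close the same way.
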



\begin{proof}
Suppressing the dependence on $n$ in the notation, let
\begin{align*} A_k&= \{  f(X_k/n) Y_k \le \varepsilon n^{1/\alpha} \},\\
\gamma(j) &=  E( Y_01_{A_0}   \mid X_0=j).
\end{align*}
Since $(X_k)$ is a Markov chain and $\tau_n$ is a stopping time, the random variables
\[  f \bigg( \frac{X_k}{n} \bigg) \big(Y_k 1_{A_k} - \gamma(X_{k})\big) 1_{\{k < \tau_n \}} \]
have zero mean and are uncorrelated.  Therefore,
\[ \Var \bigg( \sum_{k< \tau_n} f\Big(\frac {X_{k}} n \Big)(Y_k 1_{A_k} - \gamma(X_{k})) \bigg) \le \sum_{k=0}^{n-1} E\bigg( f\Big(\frac {X_{k}} n \Big)^2 (Y_k 1_{A_k} - \gamma(X_k))^2 1_{\{k < \tau_n \}} \bigg) . \]
From Lemma \ref{UgivenX} and \eqref{Stir}, we see that 
$$E((Y_0 1_{A_0} - \gamma(X_0))^2 \mid X_0 = j) \leq E(Y_0^2 1_{A_0} \mid X_0 = j) \leq c \sum_{i \leq \varepsilon n^{1/\alpha}/f(j/n)} i^2 q_i \leq c \bigg( \frac{\varepsilon n^{1/\alpha}}{f(j/n)} \bigg)^{2-\alpha}.$$ Thus,
\[ \Var \bigg( \sum_{k< \tau_n} f\Big(\frac {X_{k}} n \Big)(Y_k 1_{A_k} - \gamma(X_{k})) \bigg) \le c \varepsilon^{2-\alpha} n^{2/\alpha - 1} \sum_{k=0}^{n-1} E \bigg( f \bigg( \frac{X_k}{n} \bigg)^{\alpha} \bigg). \]
Because $\zeta < 1/\alpha$, we have, using (\ref{fbound}),
\begin{align*}\sum_{k=0}^{n-1} E \bigg( f \bigg( \frac{X_k}{n} \bigg)^{\alpha} \bigg) \leq \sum_{j=1}^n f\bigg( \frac{j}{n} \bigg)^{\alpha} \leq c \sum_{j=1}^n \bigg( \frac{j}{n} \bigg)^{-\alpha \zeta} \leq cn,
\end{align*} and it follows that
\[ \Var \bigg( n^{-1/\alpha} \sum_{k< \tau_n} f\Big(\frac {X_{k}} n \Big)(Y_k 1_{A_k} - \gamma(X_{k})) \bigg) \le c \varepsilon^{2-\alpha}. \]  Thus, by Chebyshev's Inequality, if $\varepsilon$ is sufficiently small, then
\begin{equation}\label{gammaXk}
P\bigg( \bigg|  n^{-1/\alpha} \sum_{k< \tau_n} f\Big(\frac {X_{k}} n \Big)(Y_k 1_{A_k} - \gamma(X_{k}) ) \bigg| >\eta\bigg) \le \eta.
\end{equation}

It remains to replace $\gamma(X_k)$ by $\gamma$ in this formula.  From
\[ \gamma(j) = \sum_{i \le \varepsilon n^{1/\alpha}/f(j/n)} i P(Y_0=i\mid X_0=j) \]
we get from Lemma \ref{UgivenX} and \eqref{Stir} the estimate, uniform in $n$ and $k$,
\[ \bigg| \gamma(j) - \sum_{i \le (j - 1) \wedge \varepsilon n^{1/\alpha}/f(j/n)} i q_i \bigg| \le \frac cj \sum_{i\le j-1} i^2 q_i = O(j^{1-\alpha}).\]
From \eqref{Stir} and \eqref{pkmean},
\[ \sum_{i \le (j - 1) \wedge \varepsilon n^{1/\alpha}/f(j/n)} i q_i = { \gamma - } \frac{\alpha}{(\alpha -1)\Gamma(2-\alpha)} \bigg( \frac{\varepsilon n^{1/\alpha}}{f(j/n)} \bigg)^{1-\alpha}(1+o(1)) + O(j^{1-\alpha}),  \]
where the $o(1)$ goes to 0 with $n$ going to infinity, uniformly in $j$. Putting these formulas together we arrive at
\[ \gamma(j) = \gamma - \frac{\alpha}{(\alpha -1)\Gamma(2-\alpha)} \varepsilon^{1-\alpha} n^{-1+1/\alpha} f(j/n)^{\alpha - 1}
(1+o(1)) + O(j^{1-\alpha}) . \]
It follows that
\begin{align} \label{3terms} \sum_{k<\tau_n} f\Big(\frac{X_k}n\Big) \gamma(X_{k}) = \gamma&\sum_{k<\tau_n} f\Big(\frac{X_k}n\Big) -  \frac{\alpha+o(1)}{(\alpha -1)\Gamma(2-\alpha)} \varepsilon^{1-\alpha} n^{-1 + 1/\alpha}\sum_{k<\tau_n}f\Big(\frac{X_k}n\Big)^{\alpha} \nonumber \\ &\mbox{}+  O\bigg(\sum_{k<\tau_n} f \Big(\frac{X_k}n\Big)X_{k}^{1-\alpha}\bigg).
\end{align}
Now, since $1-\alpha-\zeta = \frac 1\alpha - \zeta - \frac 1\alpha(\alpha-1)^2-1 <-1$, if $\zeta$ is sufficiently close to $1/\alpha$,
\begin{align*}
\sum_{k<\tau_n} f\Big(\frac{X_k}n\Big)X_{k}^{1-\alpha} \le c n^\zeta \sum_{k < \tau_n} X_k^{1-\alpha-\zeta} \le cn^\zeta\sum_{j\ge 1} j^{1-\alpha-\zeta} = O(n^\zeta)=o(n^{1/\alpha}).
\end{align*}
Because ${|}\frac{d}{ds} f(s)^{\alpha}{|} = \alpha f(s)^{\alpha - 1}{|}f'(s) {|}\leq c s^{-\zeta(\alpha - 1)} s^{-\zeta - 1} = cs^{-\alpha \zeta - 1}$ and $\alpha \zeta < 1$, we may estimate the middle term in the right-hand side of (\ref{3terms}) by applying the first statement of Proposition \ref{quadrature}.  This implies
\[ \sum_{k<\tau_n} f\Big(\frac{X_k}n\Big) \gamma(X_{k}) = \gamma\sum_{k<\tau_n} f\Big(\frac{X_k}n\Big) -  \frac{\alpha}{\Gamma(2-\alpha)} \varepsilon^{1-\alpha} n^{1/\alpha} \int_0^1 f(s)^{\alpha} \, ds + o_P(n^{1/\alpha}), \]
which, combined with (\ref{gammaXk}), implies the result.
\end{proof}

\subsection{Proof of Theorem \ref{onetime}}

Let $f\in \mathscr F$. We also assume that $f(x) $ is decreasing, $x^2f(x)$ is increasing and $f(x)\ge 1$ for all $x$; see the remark at the beginning of Section \ref{proofsec}.

 Let
\begin{equation}\label{Zdef}
Z_n= \int_{(0,1]} f(x) \: M_n(dx).
\end{equation}
We have to show  that $$ \gamma \sum_{k < \tau_n} f \bigg( \frac{X_k}{n} \bigg)= n \int_0^1 f(s) \: ds  - n^{1/\alpha}Z_n + o_P(n^{1/\alpha}).$$  In view of Proposition \ref{quadrature} it suffices to show that 
\begin{equation}\label{maincoupling}
\sum_{k < \tau_n} f \bigg( \frac{X_k}{n} \bigg) (Y_k - \gamma) =n^{1/\alpha} Z_n + o_P(n^{1/\alpha}).
\end{equation}

Enumerate the points of $\Theta_n$ as $(x_i, y_i)_{i=1}^{\infty}$ and for $\varepsilon >0$ let 
\begin{align} 
S_n(\varepsilon) = \sum_{f(x_i)y_i > \varepsilon} f(x_i)y_i.
\label{finitesum}
\end{align}
First let us check that 
\[ S_n(\varepsilon)- E(S_n(\varepsilon))\to Z_n \]
in probability as $\varepsilon \to 0$. For this purpose note that from \eqref{intensity}
\begin{equation}\label{ESn}
E(S_n(\varepsilon))= \int_0^1\int_{\varepsilon/f(x)}^\infty f(x)y\: \nu(dx,dy)= \varepsilon^{1-\alpha}\frac{\alpha}{\Gamma(2-\alpha)}\int_0^1 f(x)^\alpha\: dx,
\end{equation}
which is finite by our assumptions on $f$. Thus the sum in \eqref{finitesum} has a.s. finitely many summands. Also for $\eta >0$
\[ \text{Var} \bigg( \sum_{i\ge 1} f(x_i)y_i \cdot 1_{\{\eta \le y_i,\, f(x_i)y_i \le \varepsilon\}} \Bigg) = \int_0^1 \int_\eta^{\varepsilon/f(x)} f(x)^2y^2\cdot 1_{\{\eta \le \varepsilon/f(x)\}} \: \nu(dx,dy) .\]
Letting $\eta \to 0$ we obtain
\[ \text{Var}(Z_n-S_n(\varepsilon))= \int_0^1 \int_0^{\varepsilon/f(x)} f(x)^2 y^2 \: \nu(dx,dy) = \varepsilon^{2-\alpha}\frac{\alpha(\alpha-1)}{\Gamma(3-\alpha)} \int_0^1 f(x)^\alpha\: dx, \]
which goes to 0 as $\varepsilon \to 0$.


It now follows from Chebyshev's Inequality and the fact that $E[Z] = 0$ that for all $\eta >0$ and  sufficiently small $\varepsilon$, $$P (|Z_n - (S_n(\varepsilon)-E(S_n(\varepsilon)) | > \eta ) < \eta.$$  Because $E(S_n(\varepsilon))$ equals the constant $d$ in \eqref{constd}, it follows from Lemma \ref{bigUsmallU} that for sufficiently small $\varepsilon$ we have $$P \bigg( \bigg| n^{-1/\alpha} \sum_{k < \tau_n} f \bigg( \frac{X_k}{n} \bigg) (Y_k 1_{\{ f(X_k/n) Y_k \leq \varepsilon n^{1/\alpha}\}} - \gamma) - (Z_n- S_n(\varepsilon)) \bigg| > 2 \eta \bigg) \leq 2 \eta$$ for sufficiently large $n$.  Thus, to show (\ref{maincoupling}), it suffices to show that for all $\varepsilon > 0$ we have
\begin{equation}\label{couplejumps}
P \bigg( \bigg| n^{-1/\alpha} \sum_{k < \tau_n} f \bigg( \frac{X_k}{n} \bigg) Y_k 1_{\{ f(X_k/n)Y_k > \varepsilon n^{1/\alpha}\}} - S_n(\varepsilon) \bigg| > \eta \bigg) < \eta
\end{equation}
for sufficiently large $n$.  For this, we will use the following lemmas.  Again let $\Theta_n$ be the Poisson point process constructed in section \ref{twoPPP} from another Poisson point process $\Psi_n$.  Denote the points of $\Theta_n$ by $(x_i, y_i)_{i=1}^{\infty}$ with $x_i = m(r_i)$ and $y_i = m(r_i) u_i$, where $(s_i, u_i)$ are the points of $\Psi_n$ and $r_i = -s_i$.

\begin{Lemma}\label{couplelem}
Let $\delta > 0$, $\varepsilon > 0$.  With probability tending to 1 as $n \rightarrow \infty$, for all $i $ such that $f(x_i)y_i >\varepsilon$, there exists a positive integer $k_i$ such that the following hold:
\begin{enumerate}
\item[(i)] $R_{k_i}=n^{1-\alpha} r_i$, i.e. at time $n^{1-\alpha}r_i$ there is a merger in the coalescent back from time 0.
\item[(ii)]  The block size $X_{k_i}= N_n(n^{1-\alpha} r_i-)$ and the  merger's size $Y_{k_i}=N_n(n^{1-\alpha} r_i-) -N_n(n^{1-\alpha} r_i)$ fulfil $$(1 - \delta) x_i  \leq \frac{X_{k_i}}{n} \leq (1 + \delta) x_i, \hspace{.3in} (1 - \delta)y_i  \leq \frac{Y_{k_i}}{n^{1/\alpha}} \leq (1 + \delta) y_i .$$
\end{enumerate}
\end{Lemma}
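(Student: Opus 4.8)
The plan is to handle the finitely many \emph{large} points of $\Theta_n$ one at a time, showing that each is with high probability produced by a genuine merger whose block count and size have the asserted magnitudes, and then to combine the estimates by a union bound. First I would reduce to a compact window of points. Since $\Theta_n$ has the $n$-independent intensity \eqref{intensity}, the expected number of points with $x_i<x_0$ and $f(x_i)y_i>\varepsilon$ equals $\frac{\alpha-1}{\Gamma(2-\alpha)}\varepsilon^{-\alpha}\int_0^{x_0}f(x)^\alpha\,dx$, which tends to $0$ as $x_0\downarrow 0$ because $\int_0^1 f^\alpha<\infty$; likewise the expected number of points with $y_i>y_0$ tends to $0$ as $y_0\uparrow\infty$. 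Hence, given $\eta$, I can fix $x_0>0$ and $y_0<\infty$ (both independent of $n$) so that with probability at least $1-\eta$ every point with $f(x_i)y_i>\varepsilon$ satisfies $x_i\in[x_0,1]$ and $y_i\le y_0$, and therefore $u_i=y_i/x_i\le y_0/x_0=:U_0$. It then suffices to establish (i) and (ii) for the points in this compact window.

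Next I would unwind the maps relating $\Theta_n$, $\Psi_n$ and $\Upsilon'$ from Section \ref{twoPPP}. A point $(x_i,y_i)$ of $\Theta_n$ corresponds via \eqref{xymapping} to a point of $\Upsilon'$ at reverse time $n^{1-\alpha}r_i$ with value $p_i=n^{-1+1/\alpha}u_i$, where $x_i=m(r_i)$ and $y_i=m(r_i)u_i$. Because $u_i\le U_0$ we have $p_i\le n^{-1+1/\alpha}U_0\to 0$. Two things can go wrong: the point may be one of the fill-in points of $\Upsilon'$ rather than a point of $\Upsilon_n$, and even a genuine point of $\Upsilon_n$ need not force a merger. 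The first is controlled by the thinning relation between $\Upsilon'$ and $\Upsilon_n$: a point of $\Upsilon'$ at value $p_i<1$ belongs to $\Upsilon_n$ with probability $(1-p_i)^{\alpha-1}\to 1$, independently across points, so with probability tending to $1$ all the points in the window are genuine.

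For a genuine point I would then identify the conditional law of the number of merging blocks. Writing $b_i:=N_n(n^{1-\alpha}r_i-)$ for the number of blocks present just before the event, the structural fact from the construction of Section \ref{constsec} is that the event carries a fresh, independent vector $(v_1,\dots,v_n)$ of i.i.d.\ uniforms, and a current block takes part in the event precisely when the uniform attached to the (distinct) label it carries is less than $p_i$. Consequently, conditionally on the past of the coalescent (which determines $b_i$ but is independent of these uniforms), the number of blocks involved is $\mathrm{Binomial}(b_i,p_i)$ and the loss is that number minus one. By Lemma \ref{Ntlem}, applied with the constant function $h\equiv R$ where $x_0=m(R)$ (so $n^{1-\alpha}h(n)\to 0$), one gets $b_i=m(r_i)n(1+o_P(1))=x_in(1+o_P(1))$ uniformly over the window, which yields the first inequality in (ii). Since $b_ip_i=x_in\cdot n^{-1+1/\alpha}(y_i/x_i)=y_in^{1/\alpha}\to\infty$, a Chebyshev (or Chernoff) bound gives $\mathrm{Binomial}(b_i,p_i)=y_in^{1/\alpha}(1+o_P(1))$; in particular this count exceeds $2$ with high probability, so the event is a genuine merger, establishing (i) and the existence of $k_i$ with $R_{k_i}=n^{1-\alpha}r_i$, while $Y_{k_i}=y_in^{1/\alpha}(1+o_P(1))$ gives the second inequality in (ii).

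I expect the main obstacle to be the careful justification of the conditional $\mathrm{Binomial}(b_i,p_i)$ description together with the simultaneous control of $b_i$: one must argue, via the Palm--Mecke property of the Poisson process $\Upsilon_n$, that conditioning on the presence of the point at reverse time $n^{1-\alpha}r_i$ with value $p_i$ leaves the law of the coalescent on the earlier reverse times unchanged, so that Lemma \ref{Ntlem} may legitimately be invoked to pin down $b_i$, and that the uniforms attached to the conditioned point remain independent of the block structure it acts on. Once this independence is in place, the binomial concentration and the union bound over the (stochastically bounded number of) points in the compact window are routine, and letting the window exhaust the relevant points while sending the various error probabilities below $\eta$ completes the argument.
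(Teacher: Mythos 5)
Your proposal is correct and follows essentially the same route as the paper's proof: identify the large points of $\Theta_n$ with points of $\Upsilon'$, control the thinning discrepancy between $\Upsilon'$ and $\Upsilon_n$, and combine the conditional $\mathrm{Binomial}\bigl(N_n(n^{1-\alpha}r_i-),\,p_i\bigr)$ description of the merger size with the uniform block-count control of Lemma \ref{Ntlem} and Chebyshev's inequality. The only real difference is technical: you localize to a fixed compact window $x_i\in[x_0,1]$, $y_i\le y_0$ (which, since $f$ is bounded on $[x_0,1]$, even yields an $n$-independent lower bound $y_i\ge \varepsilon/\sup_{x\ge x_0}f(x)$) and use the independent-thinning probability $(1-p_i)^{\alpha-1}$ per point, whereas the paper works on a slowly growing window $r_i\le h(n)$ with $h(n)\to\infty$, $h(n)=o(n^{\alpha-1})$, obtaining the weaker $n$-dependent bound \eqref{yn} and controlling the non-genuine points by a direct first-moment integral against the intensity; both reductions are legitimate and lead to the same conclusion.
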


\begin{proof} The points $(n^{1-\alpha} s_i, n^{-1+1/\alpha}u_i)$ are points of $\Upsilon'$.  Consider those points, which in addition fulfil  $f(x_i)y_i> \varepsilon$.  First we verify that the probability that some of these points do not belong to $\Upsilon_n$ is asymptotically vanishing. 
The expected number of indices $i$ with $f(x_i)y_i > \varepsilon$, that is $u_i> \varepsilon/(f(m( r_i ))m( r_i ))$, such that $(n^{1-\alpha} s_i, n^{-1 + 1/\alpha}u_i)$ is not a point of $\Upsilon_n$ is $$\int_0^{\infty} \int_{\varepsilon/(f(m( r ))m( r ))}^{\infty} \frac{1}{\Gamma(\alpha) \Gamma(2 - \alpha)} u^{-1-\alpha} (1 - q(n^{-1+1/\alpha} u)) \: du \: dr.$$  Note that $1 - q(u) \leq cu$ for all $u\geq 0$.  Using this bound when $r \leq n^{\gamma}$ and the bound $1 - q(u) \leq 1$ when $r \geq n^{\gamma}$, we see that the above expectation is at most
\begin{align*}
&c n^{-1 + 1/\alpha} \int_0^{n^{\gamma}} \int_{\varepsilon/(f(m( r))m( r))}^{\infty} u^{-\alpha} \: du \: dr + c \int_{n^{\gamma}}^{\infty} \int_{\varepsilon/(f(m( r))m( r) )}^{\infty} u^{-1-\alpha} \: du \: dr\nonumber \\
&\hspace{.5in} = c n^{-1 + 1/\alpha} \int_0^{n^{\gamma}} \bigg( \frac{ f(m(r))m(r)}{\varepsilon} \bigg)^{\alpha - 1} \: dr + c \int_{n^{\gamma}}^{\infty} \bigg( \frac{f(m( r))m( r) }{\varepsilon} \bigg)^{\alpha} \: dr. \nonumber
\end{align*}
Making the substitution $x = m( r)$ and using (\ref{fbound}) again, we get that this expression is bounded above by $$c n^{-1+1/\alpha} \int_{m(n^{\gamma})}^1 x^{-1} f(x)^{\alpha - 1} \: dx + c \int_0^{m(n^{\gamma})} f(x)^{\alpha} \: dx \leq c n^{-1 + 1/\alpha} m(n^{\gamma})^{-\zeta(\alpha - 1)} + c \int_0^{m(n^{\gamma})} f(x)^{\alpha} \: dx,$$
which tends to zero as $n \rightarrow \infty$ if $\gamma > 0$ is sufficiently small.  By Markov's Inequality, the probability that $(n^{1-\alpha} s_i, n^{-1 + 1/\alpha}u_i)$ is a point of $\Upsilon_n$ for all $i$ with $f(x_i)y_i>\varepsilon$ tends to 1 as $n \rightarrow \infty$.

Second we show that the probability that no more than one of the remaining $N_n(n^{1-\alpha}r_i-)$ blocks takes part in a merging event at time $n^{1-\alpha}r_i$ is asymptotically vanishing.
We choose a function $h: \N \rightarrow \infty$ such that $\lim_{n \rightarrow \infty} h(n) = \infty$, and $h(n) = o(n^{\alpha-1})$.  The expected number of indices $i$ such that $r_i > h(n)$ and $f(x_i)y_i > \varepsilon$ is at most 
\begin{align*}
\int_{h(n)}^{\infty} \int_{\varepsilon/(f(m( r ))m( r ))}^{\infty} &\frac{1}{\Gamma(\alpha) \Gamma(2 - \alpha)} u^{-1-\alpha}  \: du \: dr\\&=c\int_{h(n)}^{\infty} \bigg( \frac{f(m( r))m( r)}{\varepsilon} \bigg)^{\alpha} \: dr \le c \int_0^{m(h(n))} f(x)^{\alpha} \: dx, 
\end{align*} 
which tends to zero as $n \rightarrow \infty$.  Thus, we may assume $r_i \leq h(n)$ for all $i$ with $f(x_i)y_i> \varepsilon$.  In particular, because of 
$y_i > \varepsilon/f(x_i) \ge cx_i^\zeta = c m(r_i)^\zeta \ge c r_i^{- \zeta/(\alpha-1)} \ge c h(n)^{-\zeta/(\alpha-1)}$, this implies by our assumptions on $\zeta$ and $h(n)$ that
\begin{align}\label{yn}
n^{1/\alpha} y_i \ge cn^{1/\alpha}  h(n)^{-\zeta/(\alpha-1)}\to \infty.
\end{align}
Also, because $E(S_n(\varepsilon))$ does not depend on $n$ and is finite, it suffices to show that points $(x_i,y_i)$ with $f(x_i)y_i>\varepsilon$ and $r_i\le h(n)$ lead to mergers fulfilling condition  (ii) with high probability for large $n$.

At time $n^{1-\alpha} r_i$, the number of blocks of the beta coalescent is reduced by $(A_i - 1) \vee 0$, where $A_i$ has a binomial distribution with parameters $n_i = N_n(n^{1-\alpha}r_i-)$ and $p_i = n^{-1+1/\alpha} u_i$.  By Chebyshev's Inequality, we have
\begin{equation}\label{Chebbinom}
P(|A_i- n_i p_i| > \theta n^{1/\alpha} y_i\mid  n_i, p_i, y_i) \leq \frac{n_i p_i(1 - p_i)}{(\theta n^{1/\alpha} y_i)^2} \leq \frac{n_i p_i}{(\theta n^{1/\alpha} y_i)^2}.
\end{equation}
Let $\theta = \delta/3$.  Lemma \ref{Ntlem} implies that with probability tending to 1 as $n \rightarrow \infty$, we have
\begin{equation}\label{Nnbd}
(1 - \theta) m(r_i) n \leq N_n(n^{1-\alpha} r_i-) \leq (1 + \theta) m(r_i) n,
\end{equation}
and on this event the right-hand side of (\ref{Chebbinom}) is in view of \eqref{yn} bounded above by 
\begin{align*} 
\frac{(1 + \theta) m(r_i) n^{1/\alpha} u_i}{(\theta n^{1/\alpha} y_i)^2} = \frac{1 + \theta}{\theta^2 n^{1/\alpha} y_i} { \le c\frac{(1 + \theta) }{\theta^2 n^{1/\alpha}} h(n)^{\frac{\zeta}{\alpha-1}}=o(n^{\zeta-1/\alpha})}.
\end{align*}
Due to our assumption on $\zeta$ the right-hand side  tends to zero as $n \rightarrow \infty$. Taking expectations in (\ref{Chebbinom}) gives $$ P(|A_i - n_i p_i| > \theta n^{1/\alpha} y_i) =o(1).$$  Combining this bound with (\ref{Nnbd}) gives $$\lim_{n \rightarrow \infty} P(|A_i - n^{1/\alpha} y_i| > 2 \theta n^{1/\alpha} y_i) = 0.$$  Because of \eqref{yn} we get
\begin{equation}\label{finalAi}
\lim_{n \rightarrow \infty} P(|(A_i - 1) - n^{1/\alpha} y_i| > 3 \theta n^{1/\alpha} y_i) = 0.
\end{equation}
Thus, with probability tending to one as $n \rightarrow \infty$, the beta coalescent must have a merger at time $n^{1-\alpha} r_i$, which means $n^{1-\alpha} r_i = R_{k_i}$ for some positive integer $k_i$.  That is,  condition (i) in the statement of the lemma holds. Now, because $\delta = 3 \theta$, condition (ii) is a consequence of Lemma \ref{Ntlem} and (\ref{finalAi}).  
\end{proof}

\begin{Lemma}\label{couplelem2}
Let $\delta > 0$, $\varepsilon >0$. With probability going to 1 as $n \to \infty$, for all $0\le k < \tau_n$ with $f(\frac{X_k}n)\frac{Y_k}{n^{1/\alpha}} > \varepsilon$, 
\begin{align}(1-\delta)x_{i_k}  \le \frac {X_k}n\le (1+\delta)x_{i_k},\hspace{.3in} (1-\delta)y_{i_k}  \le \frac{Y_k}{n^{1/\alpha}}\le (1+\delta)y_{i_k} ,
\label{estimate2}
\end{align}
where $i_k$ is determined by $X_{k}= N_n(n^{1-\alpha} r_{i_k}-)$ or $Y_{k}=N_n(n^{1-\alpha} r_{i_k}-) -N_n(n^{1-\alpha} r_{i_k})$.
\end{Lemma}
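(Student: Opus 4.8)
The plan is to prove the converse of Lemma~\ref{couplelem}: that lemma sends each large point of $\Theta_n$ to a comparably large coalescent merger, and here I must show that, conversely, every large coalescent merger is accounted for by a point of $\Theta_n$ of comparable size. First I would check that $i_k$ is well defined. Each merger of the coalescent is caused by a point of $\Upsilon_n$, and by the construction in Section~\ref{twoPPP} the points of $\Psi_n$ (hence of $\Theta_n$) contain the rescaled images of all points of $\Upsilon_n$; matching the merger time via $R_k=n^{1-\alpha}r_{i_k}$ therefore singles out a unique point $(x_{i_k},y_{i_k})$, and $X_k=N_n(n^{1-\alpha}r_{i_k}-)$ holds by definition. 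The block-size half of \eqref{estimate2} is then immediate from Lemma~\ref{Ntlem}, which gives $X_k/n\approx m(r_{i_k})=x_{i_k}$; since $f$ is differentiable with $|f'(x)|\le cx^{-\zeta-1}$ and, by the normalization at the start of Section~\ref{proofsec}, $f(x)\gtrsim x^{-\zeta}$, the same approximation yields $f(X_k/n)\approx f(x_{i_k})$ up to relative error $O(\delta)$.

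I would then split the large coalescent mergers according to the size of their Poisson point, fixing the threshold $\varepsilon''=\varepsilon/(4e)$. If $f(x_{i_k})y_{i_k}>\varepsilon''$, then $(x_{i_k},y_{i_k})$ is itself a large point of $\Theta_n$ and both inequalities in \eqref{estimate2} follow directly from Lemma~\ref{couplelem} (applied with threshold $\varepsilon''$ and the given $\delta$). The whole difficulty lies in excluding the complementary \emph{dangerous} events, in which a small point $f(x_{i_k})y_{i_k}\le\varepsilon''$ nonetheless induces a large merger $f(X_k/n)Y_k/n^{1/\alpha}>\varepsilon$. Here $Y_k=A_{i_k}-1$ with $A_{i_k}\sim\mathrm{Bin}(n_{i_k},n^{-1+1/\alpha}u_{i_k})$ of mean $\approx n^{1/\alpha}y_{i_k}\le\varepsilon''n^{1/\alpha}/f(x_{i_k})$, whereas the dangerous event forces $A_{i_k}>M(x_{i_k})$ with $M(x):=\varepsilon n^{1/\alpha}/f(x)$, i.e.\ an upward fluctuation to many times the mean. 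Ruling this out \emph{uniformly over all small points} is the main obstacle.

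To control the dangerous events I would first dispose of the late-coalescent regime by choosing $h(n)\to\infty$ with $h(n)=o(n^{\alpha-1})$, growing fast enough that both $K_*:=\varepsilon n^{1/\alpha}m(h(n))^{\zeta}/(2c)\to\infty$ and $m(h(n))^{1-\zeta}n^{1-1/\alpha}\to0$; all three requirements can be met simultaneously since $\zeta<1/\alpha$ (e.g.\ by a suitable power $h(n)=n^{\beta}$). For $r_{i_k}>h(n)$, monotonicity of $N_n$ and Lemma~\ref{Ntlem} at $r=h(n)$ give $X_k/n\le(1+\delta)m(h(n))$, so the crude bounds $Y_k\le X_k$ and \eqref{fbound} yield $f(X_k/n)Y_k/n^{1/\alpha}\le c(X_k/n)^{1-\zeta}n^{1-1/\alpha}=o(1)$, and no large merger can occur there. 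In the bulk $r_{i_k}\le h(n)$ the block-count concentration of Lemma~\ref{Ntlem} is in force, so $n_{i_k}\le(1+\delta)x_{i_k}n$, and the Chernoff bound $P(A\ge M)\le(e\lambda/M)^M$ applies with base $e(1+\delta)f(x)y/\varepsilon\le(1+\delta)/4\le\tfrac12$ on the small-point range.

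Finally I would bound the expected number of dangerous events by Campbell's formula. Conditioning on the concentration event of Lemma~\ref{Ntlem} turns the conditional binomial tail into a deterministic function of the point location, and evaluating the inner $y$-integral by the substitution $v=f(x)y$ against the intensity \eqref{intensity} gives
\begin{align*}
\int_{m(h(n))}^1\!\!\int_{\{f(x)y\le\varepsilon''\}}\Big(\tfrac{e(1+\delta)f(x)y}{\varepsilon}\Big)^{M(x)}\nu(dx,dy)
&\le c\int_{m(h(n))}^1 f(x)^{\alpha}\,\frac{(1/2)^{M(x)}}{M(x)-\alpha}\,dx\\
&\le \frac{c\,(1/2)^{K_*}}{K_*}\int_0^1 f(x)^{\alpha}\,dx.
\end{align*}
Since $K_*\to\infty$ and $\int_0^1 f(x)^{\alpha}\,dx<\infty$, this tends to $0$, so by Markov's inequality no dangerous event occurs with probability tending to one. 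Hence every large coalescent merger falls in the first case, and \eqref{estimate2} holds for all of them.
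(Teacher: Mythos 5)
Your argument is correct, but it takes a genuinely different route from the paper's. The paper never invokes Lemma \ref{couplelem} here: it first bounds $E(S_n(\varepsilon,\gamma))\le c\,\varepsilon^{1-\alpha}\int_0^\gamma f(s)^\alpha\,ds$, concluding both that the number of large mergers is stochastically bounded and, by taking $\gamma$ small, that with high probability every large merger occurs while $X_k\ge\gamma n$, hence at bounded reverse time $r_{i_k}\le T$ (this also yields the block-size half of \eqref{estimate2} via Lemma \ref{Ntlem}, with no need for your function $h(n)$); then, writing $Y_k=(A_k-1)\vee 0$ with $A_k$ binomial, it shows $|Y_k-n^{1/\alpha}y_{i_k}|\le\delta y_{i_k}n^{1/\alpha}$ with failure probability $O(n^{-1/(2\alpha)})$ uniformly in $k$, by a Chebyshev bound when $y_{i_k}>n^{-1/(2\alpha)}$ and a Markov bound when $y_{i_k}\le n^{-1/(2\alpha)}$, and finishes with a union bound over the stochastically bounded set of large mergers. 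You instead reduce the case of a large point to Lemma \ref{couplelem} at the lowered threshold $\varepsilon''$ (legitimate: $k_{i_k}=k$ since merger times are a.s.\ distinct) and kill the dangerous events by a Chernoff tail integrated against the full intensity \eqref{intensity} via Campbell's formula, plus a deterministic disposal of the regime $r_{i_k}>h(n)$. What each buys: your exponential tail is summable over the \emph{infinite} collection of small points at once, where the paper's second-moment tools would not be, and you reuse \ref{couplelem} rather than re-deriving two-sided bounds; the paper's route needs only Chebyshev/Markov and avoids your $h(n)$-exponent bookkeeping by localizing large mergers to $r_{i_k}\le T$. Your exponent window for $h(n)=n^\beta$ is indeed nonempty exactly because $\alpha\zeta<1$, so that part checks out.

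Two small repairs you should make. First, the dangerous event forces $Y_k>\varepsilon n^{1/\alpha}/f(X_k/n)$, not $A_{i_k}>M(x_{i_k})$; you must transfer from $f(X_k/n)$ to $f(x_{i_k})$, and the justification you cite is not quite available: the normalization at the start of Section \ref{proofsec} gives $f\ge 1$, $f$ decreasing and $x^2f(x)$ increasing, but \emph{not} a pointwise lower bound $f(x)\ge c\,x^{-\zeta}$ (consider $f\equiv 1$). Instead use the monotonicity pair as in the paper's proof of Theorem \ref{onetime}: on the concentration event, $X_k/n\ge(1-\delta)x_{i_k}$ gives $f(X_k/n)\le(1-\delta)^{-2}f(x_{i_k})$, so the dangerous event forces $A_{i_k}>(1-\delta)^2M(x_{i_k})$, and your slack $\varepsilon''=\varepsilon/(4e)$ absorbs the factor, keeping the Chernoff base below $\tfrac12$ for small $\delta$. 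Second, in the Campbell step the binomial parameter $n_{i_k}$ is history-dependent while the concentration event of Lemma \ref{Ntlem} is global; condition on the pre-merger $\sigma$-field (or stop at the first failure of concentration) so that the tail bound becomes a deterministic function of $(x_i,y_i)$ before applying the Mecke/Campbell formula. Both fixes are routine and do not change the structure of your proof.
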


\begin{proof} First we estimate the expectation of 
\[ S_n(\varepsilon,\gamma)= n^{-1/\alpha}\sum_{k<\tau_n} f\bigg( \frac {X_k}n\bigg) Y_k 1_{\{f(X_k/n)Y_k > \varepsilon n^{1/\alpha}\}} 1_{\{X_k \le \gamma n\}}\]
with $0<\gamma \le 1$. From Lemma \ref{UgivenX} and \eqref{Stir}
\[ E(Y_0 1_{\{f(X_0/n)Y_0 > \varepsilon n^{1/\alpha}\}} \mid X_0=j) \le c\sum_{i>\varepsilon n^{1/\alpha}/f(j/n)} iq_i \le c \bigg(\frac {\varepsilon n^{1/\alpha}}{f(j/n)}\bigg)^{1-\alpha}.\]
Consequently
\[  E\bigg( f\bigg( \frac {X_k}n\bigg)Y_k 1_{\{f(X_k/n)Y_k > \varepsilon n^{1/\alpha}\}}1_{\{X_k \le \gamma n\}}\bigg) \le c \varepsilon^{1-\alpha} n^{1/\alpha-1} E\bigg(f\bigg( \frac {X_k}n\bigg)^\alpha1_{\{X_k \le \gamma n\}}\bigg)\]
and, since $f$ is assumed to be decreasing,
\begin{align*} E(S_n(\varepsilon,\gamma) ) &\le c \varepsilon^{1-\alpha} n^{-1} \sum_{k=0}^n E\bigg(f\bigg( \frac {X_k}n\bigg)^\alpha1_{\{X_k \le \gamma n\}}\bigg) \\ &\le c \varepsilon^{1-\alpha} n^{-1} \sum_{1\le j\le \gamma n}f\bigg(\frac jn\bigg)^\alpha\le  c \varepsilon^{1-\alpha}\int_0^\gamma f(s)^\alpha\: ds.
\end{align*}
By the assumptions on $f$, the integral is finite. 

For $\gamma =1$ we see that $E(S_n(\varepsilon,\gamma))$ is uniformly bounded in $n$.
Since each positive summand of $S_n(\varepsilon,\gamma)$ is bigger than~$\varepsilon$, it follows that the number of positive summands is stochastically bounded. Therefore it is sufficient to verify that for any $0 \le k<\tau_n$ with $f(X_k/n)Y_k>\varepsilon n^{1/\alpha}$ the two formulas \eqref{estimate2}  hold with probability going to 1, uniformly in $k$.

Let $\theta >0$. Then there is a $\gamma >0$ such that $E(S_n(\varepsilon,\gamma)) \le \theta\varepsilon$.  Therefore, with probability at least $1 - \theta$, we have $X_k \geq \gamma n$ for all $k$ such that $f(X_k/n)Y_k>\varepsilon n^{1/\alpha}$.  In view of Lemma \ref{Ntlem}, since $N_n( r )$ is decreasing, this implies that with probability going to 1 we have $(1+\delta)m(r_{i_k})\ge \gamma$, which implies that $r_{i_k} \le T$ for some fixed constant $T<\infty$  and also
\[ (1-\delta)x_{i_k}  \le \frac {X_k}n\le (1+\delta)x_{i_k} .\]

Furthermore $X_k\ge \gamma n$ and $f(X_k/n)Y_k>\varepsilon n^{1/\alpha}$ imply
\[ Y_k > cn^{1/\alpha}\]
with $c=  \varepsilon/\sup_{x \ge \gamma} f(x) >0$. Let  $n_k= N_n(n^{1-\alpha}r_{i_k}-)$.
Since $r_{i_k} \le T$, by Lemma \ref{Ntlem} with probability going to 1
\begin{align} |n_kp_{i_k} -n^{1/\alpha} y_{i_k}|= |n^{-1}N_n(n^{1-\alpha}r_{i_k}-)- m(r_{i_k})|n^{1/\alpha} u_{i_k} \le \tfrac \delta 3 n^{1/\alpha}y_{i_k}.  
\label{estimate3}
\end{align}
Thus for $\eta=1/(2\alpha)$ and $n$ sufficiently large
\begin{align*}
 P(Y_k > cn^{1/\alpha}&, |Y_k -n^{1/\alpha} y_{i_k}| >  \delta y_{i_k} n^{1/\alpha} \mid n_k,p_{i_k},y_{i_k}) \\&\le P( Y_k>0, |Y_k+1 -n_kp_{i_k}| >  \tfrac \delta 2y_{i_k} n^{1/\alpha} \mid n_k,p_{i_k},y_{i_k})1_{\{y_{i_k} > n^{-\eta}\}} \\ &\mbox{} \qquad +P(Y_k > cn^{1/\alpha } \mid n_k,p_{i_k},y_{i_k}) 1_{\{y_{i_k} \le n^{-\eta}\}}.
 \end{align*}
Since $Y_k=(A_k-1)\vee 0$, where $A_k$ is binomial with parameters $n_k$ and $p_{i_k}$, it follows that
\begin{align*}
 P(Y_k > cn^{1/\alpha}&, |Y_k -n^{1/\alpha} y_{i_k}| >  \delta y_{i_k} n^{1/\alpha} \mid n_k,p_{i_k},y_{i_k}) \\&\le
 P(Y_k>0,|A_k-n_kp_{i_k}|>  \tfrac \delta 2 y_{i_k} n^{1/\alpha} \mid n_k,p_{i_k},y_{i_k}) 1_{\{y_{i_k} > n^{-\eta}\}} \\ &\mbox{} \qquad +P(A_k > cn^{1/\alpha } \mid n_k,p_{i_k},y_{i_k}) 1_{\{y_{i_k} \le n^{-\eta}\}}.
\end{align*}
Chebyshev's and Markov's inequality imply together with \eqref{estimate3}
\begin{align*}
 P(Y_k > cn^{1/\alpha}&, |Y_k -n^{1/\alpha} y_{i_k}| >  \delta y_{i_k} n^{1/\alpha} \mid n_k,p_{i_k},y_{i_k}) \\&\le
\frac 4{\delta^2} \frac{ n_kp_{i_k}(1-p_{i_k})}{(y_{i_k} n^{1/\alpha})^2}1_{\{y_{i_k} > n^{-\eta}\}} + \frac{n_kp_{i_k}}{cn^{1/\alpha}}1_{\{y_{i_k} \le n^{-\eta}\}}\\
&\le
\frac 8{\delta^2} \frac 1{y_{i_k} n^{1/\alpha}}1_{\{y_{i_k} > n^{-\eta}\}} + \frac {2y_{i_k}} c 1_{\{y_{i_k} \le n^{-\eta}\}} \le c n^{-1/(2\alpha)}
\end{align*}
and therefore
\[  P(Y_k > cn^{1/\alpha}, |Y_k -n^{1/\alpha} y_{i_k}| >  \delta y_{i_k} n^{1/\alpha}) \to 0. \]
Since $\theta$ was arbitrary, this gives our assertions.
\end{proof}

\begin{proof}[Proof of Theorem \ref{onetime}]
We make use of the following preliminary estimate for $f$.  Let $\xi >0$. Also let $0<\delta < 1$, $u,U\in (0,1]$, and $v,V\ge 0$. If $U\ge (1-\delta)u$ and $V\le (1+\delta)v$, then, since $f(x)$ is decreasing and $x^2f(x)$ is increasing,
\[ f(U)V \le f((1-\delta)u)(1+\delta)v \le (1-\delta)^{-2}(1+\delta)f(u)v \le (1+\xi)f(u)v, \]
if $\delta$ is sufficiently small (given $\xi$). Together with an analogous, reversed estimate we get that if $\xi >0$, and $(1-\delta)u \le U \le (1+\delta)u$, and $(1-\delta)v \le V \le (1+\delta)v$, then
\[ (1-\xi )f(u)v \le f(U)V\le (1+\xi)f(u)v ,\]
if $\delta>0$ is sufficiently small, which entails
\[ |f(U)V - f(u)v| \le \xi f(u)v.\]

Now let $\varepsilon, \xi > 0$. Then by Lemmas \ref{couplelem} and \ref{couplelem2}  it follows that, if $ f(x_i)y_i \notin [\varepsilon-\xi, \varepsilon + \xi]$ for all $i \ge 1$, then with probability going to 1
\[ f(x_i)y_i > \varepsilon \quad \Rightarrow \quad f\bigg(\frac {X_{k_i}}n\bigg)\frac {Y_{k_i}}{n^{1/\alpha}}> \varepsilon \qquad \text{and} \qquad  f\bigg(\frac {X_k}n\bigg)\frac {Y_k}{n^{1/\alpha}}> \varepsilon\quad \Rightarrow \quad f(x_{i_k})y_{i_k} > \varepsilon. \]
Thus, with $S_n(\varepsilon)$ defined as in (\ref{finitesum}),
\begin{align*} \bigg|n^{-1/\alpha} \sum_{k < \tau_n} f \bigg( \frac{X_k}{n} \bigg) &Y_k 1_{\{ f(X_k/n)Y_k > \varepsilon n^{1/\alpha}\}} - S_n(\varepsilon)\bigg| \\ &= \bigg| \sum_{i \ge 1} \bigg(f \bigg( \frac{X_{k_i}}{n} \bigg) \frac{Y_{k_i}}{n^{1/\alpha}} -f(x_i)y_i \bigg)1_{\{f(x_i)y_i> \varepsilon\}}  \bigg| \\
&\le \xi \sum_{i \ge 1} f(x_i)y_i 1_{\{f(x_i)y_i> \varepsilon\}} = \xi S_n(\varepsilon).
\end{align*}

We end up with
\begin{align*}
P \bigg( \bigg| n^{-1/\alpha} \sum_{k < \tau_n} &f \bigg( \frac{X_k}{n} \bigg) Y_k 1_{\{ f(X_k/n)Y_k > \varepsilon n^{1/\alpha}\}} - S_n(\varepsilon) \bigg| > \eta \bigg) \\
&\le P(\xi S_n(\varepsilon) > \eta) + P(\varepsilon-\xi \le f(x_i)y_i \le \varepsilon + \xi \text{ for some } i \ge 1) + o(1)\\
&\le \frac \xi \eta E(S_n(\varepsilon)) + P(\varepsilon-\xi \le f(x_i)y_i \le \varepsilon + \xi \text{ for some } i \ge 1) + o(1).
\end{align*}
Since the expectation is uniformly bounded in $n$ by (\ref{ESn}), the first term can be made arbitrarily small by decreasing $\xi$. The same is true for the second term, thus \eqref{couplejumps} follows, and the proof is finished.
\end{proof}

\end{document}